\documentclass[11pt,a4paper]{article}

\usepackage[a4paper,top=2.5cm,bottom=2.5cm,left=2.5cm,right=2.5cm]{geometry}

\usepackage{mathpazo}
\usepackage[T1]{fontenc}
\usepackage[utf8]{inputenc}
\usepackage{lmodern}    
\usepackage[dvipsnames]{xcolor}
\usepackage{soul}
\usepackage[normalem]{ulem}
\usepackage{cancel}

\usepackage{setspace}
\usepackage{amsmath,amssymb,amsthm,mathtools}
\usepackage{dsfont}      
\usepackage{xifthen}
\usepackage{cancel}
\usepackage{gensymb}

\usepackage{tikz}
\usepackage{float}
\usepackage{subcaption}
\usepackage[section]{placeins}
\usepackage{pgfplots}
\pgfplotsset{compat=1.18}

\definecolor{fmi}{rgb}{0,0.48,0.55}
\definecolor{fsublue}{RGB}{0,102,204}
\definecolor{fmiDark}{rgb}{0,0.38,0.45} 
\definecolor{mydarkred}{RGB}{95,5,10}

\usepackage[backend=biber, style=numeric, natbib=true, maxbibnames=10]{biblatex}
\usepackage[autostyle=true]{csquotes}

\usepackage{hyperref}

\usepackage{enumitem}

\usepackage{titlesec}
\titleformat{\section}
  {\normalfont\Large\bfseries\centering}
  {\thesection}{1em}{}
\titleformat{\subsection}
  {\normalfont\normalsize\bfseries}
  {\thesubsection}{1em}{}

\usepackage{pgfplots}
\pgfplotsset{compat=1.18}
\usepackage{tikz-3dplot}
\tdplotsetmaincoords{70}{110}

\newtheorem{theorem}{Theorem}[section]
\newtheorem{proposition}[theorem]{Proposition}

\newtheorem{lemma}[theorem]{Lemma}
\theoremstyle{definition}
\newtheorem{defn}[theorem]{Definition}
\newtheorem{remark}[theorem]{Remark}
\newtheorem{example}[theorem]{Example}

\makeatletter
\renewenvironment{proof}[1][\proofname]{%
	\par\pushQED{\qed}%
	\normalfont\topsep6\p@\@plus6\p@\relax
	\trivlist
	\item[\hskip\labelsep\textbf{\textit{#1.}}]%
}{%
	\popQED\endtrivlist\@endpefalse
}
\makeatother

\newcommand{\R}{\mathbb{R}}

\newcommand{\norm}[1]{\left\lVert {#1} \right\rVert}

\renewcommand{\geq}{\geqslant}
\renewcommand{\leq}{\leqslant}

\DeclareMathOperator{\cl}{cl}
\let\int\relax
\DeclareMathOperator{\int}{int}
\DeclareMathOperator{\relint}{ri}

\DeclareMathOperator{\conv}{conv}
\DeclareMathOperator{\cone}{cone}
\DeclareMathOperator*{\bd}{bd}
\DeclareMathOperator*{\homog}{hom}

\DeclareMathOperator*{\Min}{Min}
\DeclareMathOperator*{\wMin}{wMin}

\DeclareMathOperator*{\verti}{vert}
\DeclareMathOperator*{\extdir}{extdir}

\DeclareUnicodeCharacter{1EC7}{\d{\^{e}}}
\DeclareUnicodeCharacter{1EE5}{\d{u}}

\begin{document}

\title{A Solution Concept for Convex Vector Optimization Problems based on a User-defined Region of Interest}

\author{Daniel D{\"o}rfler\thanks{daniel.doerfler@uni-jena.de, Friedrich Schiller University Jena, Germany. ORCID ID: 0000-0002-9503-3619.}
  \and Rebecca K{\"o}hler\thanks{rebecca.koehler@uni-jena.de, Friedrich Schiller University Jena, Germany. ORCID ID: 0009-0009-5713-6471.}
  \and Andreas L{\"o}hne\thanks{andreas.loehne@uni-jena.de, Friedrich Schiller University Jena, Germany. ORCID ID: 0000-0003-0872-4735.}}

\date{\today}

\maketitle

\begin{abstract}
\setlength{\parskip}{0pt}
\noindent
This work addresses arbitrary convex vector optimization problems, which constitute a general framework for multi-criteria decision-making in diverse real-world applications. Due to their complexity, such problems are typically tackled using polyhedral approximation. Existing solution concepts rely on additional assumptions, such as boundedness, polyhedrality of the ordering cone, or existence of interior points in the ordering cone, and typically focus on absolute error measures. We introduce a solution concept based on the homogenization of the upper image that employs relative error measures and avoids additional structural assumptions. Although minimality is not explicitly required, a form of approximate minimality is implicitly ensured. The concept is straightforward, requiring only a single precision parameter and, owing to relative errors, remains robust under scaling of the target functions. Homogenization also eliminates the need for the binary distinction between points far from the origin and directions which can lead to numerical difficulties. Furthermore, in practice decision-makers often identify a region where preferred solutions are expected. Our concept supports both a global overview of the upper image and a refined local perspective within such a user-defined region of interest (RoI). We present a decision-making procedure enabling iterative refinement of this region and the associated preferences.

\medskip
\noindent
{\bf Keywords:} convex vector optimization, multi-objective optimization, approximation methods, homogenization, preference-based decision-making.\par
\medskip
\noindent{\bf MSC2020:} 90C29, 90C25, 90C59, 90B50 \par
\end{abstract}

\section{Introduction}
In multi-criteria decision-making the goal is to identify efficient solutions — decisions where improving one objective necessarily deteriorates another. \textit{Convex vector optimization problems}, characterized by convex objective functions and feasible region, arise naturally in fields such as engineering, economics, financial mathematics, and dynamic programming, see e.g. \cite{jahn2011vector}, \cite{rudloff2021certainty} and \cite{kovavcova2021time}. Due to their inherent complexity, these problems are typically approached via polyhedral approximation \cite{ehrgott2011approximation, lohne2014primal, ararat2022norm, wagner2023algorithms}, which aims to represent the set of feasible outcomes in a tractable form for decision support.\par
Existing solution concepts often impose further structural assumptions on the problem setting. The majority of literature proposes solution concepts for bounded problems, see, e.g., \cite{ehrgott2011approximation} and \cite{lohne2014primal}. Moreover, most approaches use absolute error measures \cite{ehrgott2011approximation, lohne2014primal, ararat2022norm, wagner2023algorithms} such as the \textit{Hausdorff-distance} to quantify approximation quality. While effective in \textit{bounded} or \textit{self-bounded} settings, these methods encounter limitations when applied to general \textit{unbounded} problems \cite{ulus2018tractability}. To address these limitations, the $(\varepsilon,\delta)$-concept was independently proposed in \cite{wagner2023algorithms} and \cite{dorfler2022approximation}. It is a hybrid approach combining absolute and relative error measures. In the context of convex vector optimization, \cite{wagner2023algorithms} apply the concept via a two-step process. It begins with approximating the recession cone of the upper image using a bound $\delta$ on the relative, \textit{truncated Hausdorff-distance}. Based on this, an approximation within a bounded region emerges in the second step. This approximation does not exceed a specified absolute Hausdorff-distance $\varepsilon$ to the upper image. Notably, this region is not predefined but results implicitly from the parameter setting and structure of the upper image. Although the method addresses the general unbounded case, it still relies on certain structural properties. While in \cite{wagner2023algorithms} polyhedrality of the ordering cone is assumed for the sake of algorithmic implementation, existence of interior points in the ordering cone is an inherent requirement of the concept.\par
In this work, we introduce an alternative solution concept for general convex vector optimization problems based on \textit{homogeneous $\delta$‑approximation}. It relies solely on a relative error measure, avoids additional structural assumptions, treats points and directions in a unified manner, and supports a practical decision-making process that can be focused on a user-specified \textit{region of interest (RoI)}. The concept is straightforward, requiring only a single precision parameter $\delta$. While we do not enforce (approximate) minimality, proximity to \textit{weakly minimal elements} can still be bounded within a certain region depending on the precision parameter $\delta$.\par
In many applications, decision-makers identify a region where preferred solutions are expected – either based on prior knowledge or interactively during the decision process. We formalize this intuition via a user-defined RoI, in which tighter approximation guarantees are provided. Outside this region, the approximation still captures global properties of the upper image. We propose an interactive procedure that enables iterative refinement of the RoI and the associated preferences, facilitating a preference-driven exploration of the upper image.\par
The use of the concept of \textit{homogenization} in approximately solving convex vector optimization problems is further motivated by the goal of reducing the upper image to the simpler structure of a convex cone, thereby facilitating its approximation and the implementation. Homogenization refers to a geometric concept in which the closed conic hull of a given nonempty convex set $Z \subseteq \mathbb{R}^m$ embedded in the level 1 of the space $\mathbb{R}^{m+1}$ is constructed. To be precise, the homogenization of $Z$ is given in \cite[p. 18 ff.]{brinkhuis2020convex} as follows:
$$\hom Z := \cl \cone (Z \times \{1\}).$$
In this way, the convex set, such as the upper image of a convex vector optimization problem, can be uniquely identified with a convex cone. A homogeneous $\delta$-approximation of the set is obtained by approximating its homogenization with another cone, intersecting this conic approximation with the embedding half-space, and projecting the result back to the original space by dropping the last component, see \cite[p. 18 ff.]{brinkhuis2020convex}. The illustration in Figure \ref{figHomog} gives a visual impression of the idea.

\begin{figure}[h!]
\centering
\begin{tikzpicture}[tdplot_main_coords, scale=2]
		
		\draw[->] (0,0,0) -- (2,0,0) node[anchor=north east]{};
		\draw[->] (0,0,0) -- (0,2,0) node[anchor=north west]{};
		\draw[->] (0,0,0) -- (0,0,1.5) node[anchor=south]{};

		\draw[fmi] (0,0,0) -- (0.97,0.51,1);
		\draw[fmi] (0,0,0) -- (0.9,1.3,1);
		\fill[fmi!20, opacity=0.4] 
		(0,0,0) -- (0.97,0.51,1) -- (0.9,1.3,1) -- cycle;
		
		\fill[gray!20,opacity=0.6] (0,0,1) -- (1.5,0,1) -- (1.5,1.6,1) -- (0,1.6,1) -- cycle;
		\draw[gray] (0,0,1) -- (1.5,0,1) -- (1.5,1.6,1) -- (0,1.6,1) -- cycle;
		
		\fill[gray!20,opacity=0.6] (0,0,0) -- (1.5,0,0) -- (1.5,1.6,0) -- (0,1.6,0) -- cycle;
		\draw[gray] (0,0,0) -- (1.5,0,0) -- (1.5,1.6,0) -- (0,1.6,0) -- cycle;
		
		\draw[dashed] (0,0,1) -- (0,0,0);
		\node at (-0.1,-0.1,1) {\small $1$};
		
		\filldraw[fill=fmi, draw=fmi, opacity=0.3] (0.9,0.9,1) circle[radius=0.4];
		\node[fmi] at (1.5,1.1,1.25) {\tiny{$Z\times \{1\}$}};
		
		\node[fmi] at (1.5,1.1,0.75) {\tiny{$\hom Z$}};
\end{tikzpicture}
\caption{Homogenization of a convex set $Z$.}
\label{figHomog}
\end{figure}
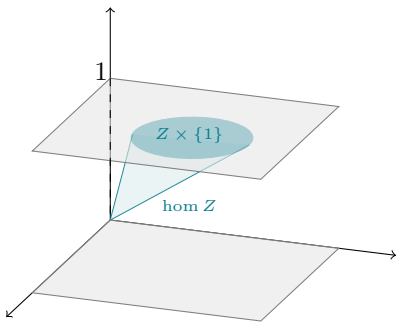 \par

Points and directions of the original set are represented by the same geometric objects in its homogenization. This avoids the numerical difficulties of distinguishing between distant points and recession directions when approximating a set.\par
In this work, we consider convex vector optimization problems of the general form

\begin{align}
\min_{x} F(x) \quad 
\text{s.t.} \quad x \in S \quad \text{w.r.t.} \quad \leq_C \tag{VCP}
\end{align}

where $S \subseteq \mathbb{R}^n$ is a nonempty convex set, $F: \mathbb{R}^n \rightarrow \mathbb{R}^m$ is a $C-$convex, vector-valued function, and $C \subseteq \mathbb{R}^m$ is a non-trivial pointed closed convex ordering cone. \par
The remainder of this work is structured as follows: Section \ref{prelim} introduces notation and basic concepts. The main results are given in Section \ref{main}, where the homogeneous $\delta$-solution concept and its theoretical and practical implications are presented. Section \ref{DMprocedure} presents a straightforward decision-making procedure and Section \ref{comparison} compares the concept with existing approaches. In Section \ref{proofs} we provide the mathematical proofs for the results of Section \ref{main} and in Section \ref{outlook}, we conclude with final remarks.
\section{Preliminaries} \label{prelim}
Let $Z \subseteq \mathbb{R}^m$ be a nonempty convex set. We denote the interior, relative interior, closure, and boundary of a set $Z$ by $\int Z$, $\relint Z$, $\cl Z$, and $\bd Z$, respectively. Open and closed Euclidean $r$-balls around a point $p \in \mathbb{R}^m$ are given by $U_r(p)$ and $B_r(p)$, respectively, and the Euclidean unit ball around the origin is denoted by $\mathbb{B}$. By $\left\lVert x \right\rVert$, we denote the Euclidean norm of a vector $x \in \mathbb{R}^m$. By $d(x,y) = \left\lVert x-y \right\rVert$ we denote the Euclidean distance of two points $x,y \in \mathbb{R}^m$.
A set $Z$ is convex if $\lambda x + (1-\lambda)y \in Z$ for all $x,y \in Z$ and $\lambda \in (0,1)$. A cone $K$ is defined by $\lambda k \in K$ for all $k \in K$ and for all $\lambda \geq 0$. The convex and conic hull of $Z$ are defined as and denoted by
\[
\conv Z = \left\{\sum_{i=1}^m \lambda_i x^i \: \middle| \: m \in \mathbb{N},  x^i \in Z, \lambda_i \geq 0, \sum_{i=1}^m \lambda_i = 1, i = 1, \dots, m \right\},
\]
\[
\cone Z = \left\{\sum_{i=1}^m \lambda_i x^i \: \middle| \: m \in \mathbb{N},  x^i \in Z, \lambda_i \geq 0, i = 1, \dots,m \right\},
\]
respectively. The \textit{recession cone} of $Z$ is given by 
$$
0^+ Z = \left \{y \in \mathbb{R}^m \: \middle| \:   x + \lambda y \in Z \text{ for all } x \in Z, \lambda \geq 0 \right \},
$$
with its elements being referred to as \textit{recession directions} or \textit{directions} of $Z$.\par
A polyhedron is the intersection of finitely many closed half-spaces \cite{tyrrell1970convex}. By the Minkowski-Weyl theorem, any polyhedron $P$ with $\verti P \neq \emptyset $ can be written as $P = \conv(\verti P) + \cone(\extdir P)$, where $\verti P$ and $\extdir P $ are the sets of extreme points and directions of $P$, see e.g. \cite[Chapter 8]{schrijver1998theory}.\par
A convex cone is said to be \textit{solid} if it has nonempty interior, \textit{pointed} if it does not contain any lines and \textit{non-trivial} if it is not equal to $\left\{0\right\}$. The non-trivial pointed closed convex ordering cone $C$ defines an ordering in (VCP) via the relation $y_1 \leq_C y_2$ if and only if $y_2 - y_1 \in C$ for all $y_1, y_2 \in \mathbb{R}^m$. For (VCP), we say that the function $F$ is \textit{$C-$convex} if 
$$F(\lambda x_1 + (1-\lambda) x_2) \leq_C \lambda F(x_1) + (1-\lambda) F(x_2)$$
for all $x_1,x_2 \in \mathbb{R}^n$ and all $\lambda \in (0,1)$. The set of \textit{$C-$minimal elements} of the problem (VCP) is equal to the set of $C-$minimal elements of $F[S]$ given by 
$$
\Min F[S] = \left \{ y \in F[S] \: \middle| \:   (\left \{y \right\} - C \setminus \left\{0 \right\}) \cap F[S] = \emptyset \right\}
$$
and \textit{weakly $C$-minimal elements} of (VCP) are given by the set of weakly $C-$minimal elements of $F[S]$
$$
\wMin F[S] = \left\{ y \in F[S] \: \middle| \:   (\{y\} - \int C) \cap F[S] = \emptyset \right\}.
$$
Note that the notion of weak minimality is only meaningful if the ordering cone $C$ is solid. A feasible point $x \in S$ is a (weak) minimizer of the problem (VCP) if $F(x) \in \Min F[S]$ ($F(x) \in \wMin F[S]$), see e.g. \cite[Chapter 1]{eichfelderadaptive}.\par
We define the \textit{upper image} of a problem of the form (VCP) by 
$$\mathcal{P} = \cl(F[S] + C),$$ 
see e.g. \cite{lohne2014primal}. We call problem (VCP) bounded with respect to a cone $K \supseteq C$ if $\mathcal{P} \subseteq B + K$ for some compact set $B \subseteq \mathbb{R}^m$. Note that if $K$ is solid, this definition of boundedness with respect to $K$ is equivalent to $\mathcal{P} \subseteq \{p\} + K$ for some $p \in \mathbb{R}^m$ \cite[Theorem 3.2]{dorfler2022approximation}. If the problem is bounded with respect to $C$, the problem itself is called bounded; otherwise, it is unbounded, see e.g. \cite[Definition 4.2]{ulus2018tractability}. A problem is self-bounded if $\mathcal{P} \neq \mathbb{R}^m$ and it is bounded with respect to $0^+ \mathcal{P}$ \cite[Definition 4.5]{ulus2018tractability}.\par
To assess the quality of approximations, we use Euclidean distances, the Hausdorff-distance and the truncated Hausdorff-distance. The Euclidean distance between a point $y \in \mathbb{R}^m$ and a nonempty set $M \subseteq \mathbb{R}^m$ is defined as
$$d(y,M) = \inf_{z \in M} \lVert y - z \rVert.$$
For nonempty convex sets $M_1$ and $M_2$, the Hausdorff-distance is defined via the excess \cite[p. 12]{hiriart2004fundamentals}. The excess of $M_1$ over $M_2$ is defined as 
$$
e[M_1, M_2] = \sup_{x \in M_1} d(x, M_2),
$$
and the Hausdorff-distance is given by
$$
d_H(M_1, M_2) = \max\{e[M_1, M_2], e[M_2, M_1]\}.
$$
For closed, convex cones $K_1$ and $K_2$, the truncated Hausdorff-distance \cite{iusem2010distances} can be defined as
\[
d_{tH}(K_1, K_2) = d_H(K_1 \cap \mathbb{B}, K_2 \cap \mathbb{B}).
\]


\section{A Solution Concept based on Homogenization} \label{main}
With the novel solution concept, we aim to address general convex vector optimization problems of the form (VCP), explicitly allowing for unboundedness. Our objective of approximating the upper image of the problem is motivated by several practical considerations. Since decision-makers usually base their decisions on the outcomes of feasible choices rather than the choices themselves, and the dimension of the outcome space is typically lower than that of the decision space, considerable effort has been devoted to representing or approximating the set of possible outcomes $F[S]$, see e.g. \cite{benson1998outer}. More recent literature has shifted focus toward approximating the upper image, see for instance \cite{hamel2014benson} for the linear case and \cite{lohne2014primal, ararat2022norm, keskin2023outer, wagner2023algorithms} for general convex settings. In contrast to $\mathcal{P}$, the set $F[S]$ is not necessarily convex in our framework. The advantage of approximating $\mathcal{P}$ rather than $F[S]$ is that the approximation of convex sets becomes significantly more tractable. This is due to their favourable structural properties that allow convex polyhedra with their simple, well-understood geometry to serve as effective approximants \cite{bronstein2008approximation}. Convex sets admit both primal and dual representations which can be exploited in the construction of such polyhedral approximations \cite[Chapter 4.2]{brinkhuis2020convex}. Importantly, the approximation of the upper image does not entail any loss of information as the closure of the set $F[S]$ and $\mathcal{P}$ share the same $C-$minimal elements. Consequently, the upper image serves as a descriptive and informative object, offering the decision-maker the most valuable insight into the set of attainable outcomes.\par
We use the approach of homogeneous $\delta$-approximation of the upper image, which is a mean to approximate arbitrary convex sets using polyhedra and a relative error measure \cite{dorfler2024polyhedral}. We now provide a formal definition of a homogeneous $\delta$-approximation of a nonempty convex set.

\begin{defn}\label{defHomDelApprox} \cite[Definition 3.2]{dorfler2024polyhedral}. 
    Let $\delta>0$. A nonempty polyhedron $P \subseteq \mathbb{R}^m$ is called a homogeneous $\delta$-approximation of a nonempty convex set $Z \subseteq \mathbb{R}^m$ if
    $$d_{tH}(\hom P, \hom Z) \leq \delta.$$
\end{defn}

Using this type of approximation comes with several advantages. First, this measure of distance between two convex sets $P$ and $Z$ is a metric for nonempty closed convex sets \cite[Theorem 4.46]{rockafellar1998variational}. We strive to obtain an approximation $P$ that is sufficiently close to the set $Z$ in this metric.\par
Another aspect is that the error parameter $\delta$ specifies the maximum Hausdorff-distance on the unit ball $\mathbb{B}$ between the homogenizations of set $Z$ and that of its approximation $P$. This is a relative error bound since it allows the absolute distance to grow as we consider points further away from the origin. For $x \in \mathbb{R}^m$ and $P = \left\{x \right\}$ with $d(x,Z) \leq \varepsilon$, it holds that \cite{dorfler2024polyhedral}:
\[
d_{tH}\left(\hom P, \hom Z\right) \leq \frac{\varepsilon}{\sqrt{x^Tx+1}}
\]
with the upper bound being sharp. Hence, applied to the context of convex vector optimization, we get a solution concept that is robust to different scaling of the objective functions.\par
Another advantage of using homogeneous $\delta$-approximation is that we can employ the simpler, conic structure of the homogenizations. In particular, points and directions of a set are represented by the same objects in its homogenization. To this end, consider the following fundamental characterization given in \cite{tyrrell1970convex}:

\begin{proposition}\label{rock} \cite[Theorem 8.2]{tyrrell1970convex}. Let $Z$ be a nonempty closed convex set. Then
$$\hom Z = \cone(Z \times \{1\}) \cup (0^+ Z \times \{0\}).$$
\end{proposition}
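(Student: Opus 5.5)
We prove the two inclusions separately. Write $K := \cone(Z \times \{1\})$, so that $\hom Z = \cl K$ by definition. Since $Z$ is convex, $Z\times\{1\}$ is convex, and therefore
$$K = \{(\lambda z,\lambda) \mid z\in Z,\ \lambda\geq 0\} = \{(\lambda z,\lambda) \mid z\in Z,\ \lambda>0\}\cup\{0\}.$$
Indeed, a nonnegative combination $\sum_i \lambda_i(z^i,1)$ with $\lambda:=\sum_i\lambda_i>0$ equals $\lambda\,(\sum_i (\lambda_i/\lambda) z^i,1)$, and the inner point lies in $Z$ by convexity. The claimed identity is then $\cl K = K \cup (0^+Z\times\{0\})$.

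For the inclusion ``$\supseteq$'', note that $K\subseteq \cl K$ trivially. Let $y\in 0^+Z$ and fix some $z\in Z$, which exists since $Z$ is nonempty. For $t>0$, the point $z+ty$ lies in $Z$, so
$$\tfrac1t\,(z+ty,1) = \bigl(\tfrac1t z + y,\ \tfrac1t\bigr)\in K.$$
Letting $t\to\infty$ gives $(y,0)\in\cl K$. This direction uses only the definition of $0^+Z$.

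For the inclusion ``$\subseteq$'', take $(x,\mu)\in\cl K$ and a sequence $\lambda_k(z^k,1)\to(x,\mu)$ with $z^k\in Z$ and $\lambda_k\geq0$. Then $\lambda_k\to\mu$, so $\mu\geq0$.

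\emph{Case $\mu>0$.} Eventually $\lambda_k>0$, so $z^k=\lambda_k z^k/\lambda_k\to x/\mu$. Closedness of $Z$ gives $x/\mu\in Z$, hence $(x,\mu)=\mu(x/\mu,1)\in K$.

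\emph{Case $\mu=0$.} We must show $x\in 0^+Z$. This is the main step, and it is where closedness and convexity of $Z$ are both essential. Fix arbitrary $\bar z\in Z$ and $t\geq0$; the goal is $\bar z+tx\in Z$. Along the sequence $\lambda_k\to0$ with $\lambda_kz^k\to x$, consider the convex combinations
$$w^k := (1-t\lambda_k)\,\bar z + t\lambda_k\, z^k,$$
which lie in $Z$ once $t\lambda_k\leq1$, i.e.\ for all large $k$. Passing to the limit, $(1-t\lambda_k)\bar z\to\bar z$ and $t\lambda_kz^k\to tx$, so $w^k\to\bar z+tx$. Since $Z$ is closed, $\bar z+tx\in Z$, and hence $x\in0^+Z$.

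The degenerate case $x=0$, $\mu=0$ needs no separate treatment: the origin belongs to $K$, and $0\in 0^+Z$ in any case.

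Combining the two inclusions yields $\hom Z=\cone(Z\times\{1\})\cup(0^+Z\times\{0\})$.
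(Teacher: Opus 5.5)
Your proof is correct. The reduction $\cone(Z\times\{1\})=\{\lambda(z,1)\mid z\in Z,\ \lambda\geq 0\}$ via convexity is valid, and so is the limit $\tfrac1t(z+ty,1)\to(y,0)$ for ``$\supseteq$''. The split of ``$\subseteq$'' into the case $\mu>0$ (closedness of $Z$) and the case $\mu=0$ (the convex combinations $w^k$ plus closedness) also goes through, including terms with $\lambda_k=0$.

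The paper does not prove this proposition; it imports it from Rockafellar's Theorem 8.2, so your argument is a self-contained substitute for that citation.

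The standard proof in the cited source is organised around the fact that $\cl K$ is itself a convex cone. Once one knows that its slice at height one is exactly $Z\times\{1\}$, which is your case $\mu>0$, take any $(x,0)\in\cl K$ and $\bar z\in Z$. The sum $(\bar z,1)+t(x,0)=(\bar z+tx,1)$ stays in $\cl K$, hence $\bar z+tx\in Z$. Your sequence $w^k=(1-t\lambda_k)\bar z+t\lambda_k z^k$ is precisely this addition step written out for approximating sequences, so the two proofs agree in substance.

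The cone formulation is shorter. Yours needs nothing beyond the definitions, convexity and closedness of $Z$, and as a by-product it gives directly the sequential description of $0^+Z$: it is the set of limits of $\lambda_k z^k$ with $z^k\in Z$ and $\lambda_k\to 0$, which is the other half of Rockafellar's theorem.
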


The representation in Proposition \ref{rock} shows that the homogenization cone stores information on both points $z$ of $Z$ as $(z,1)^T$ and directions $d$ of $0^+Z$ as $(d,0)^T$. Approximating the homogenization cone means not enforcing a strict distinction between points with large norm and directions. For an unbounded set $Z$, any direction $d\in 0^+Z$ can be approximated by points $z + t d \in Z$ for arbitrary $z \in Z$ and $t$ large. For $t\to\infty$, the homogenized points $(z + t d,\, 1)^T$ scaled by $1/t$ converge to $(d,0)^T$ in $\hom Z$:
	\[
	\frac{1}{t}(z + t d,\, 1)^T = \left(d + \frac{z}{t},\, \frac{1}{t}\right)^T \xrightarrow{t\to\infty} (d,0)^T.
	\]
This shows that directions of $Z$ arise as limits of points with increasing norm. Consequently, when approximating $Z$ via the approximation of its homogenization, a direction of $Z$ may be approximated by a point that lies sufficiently far from the origin. \par
In the context of convex vector optimization, we define a (finite) homogeneous $\delta$-solution of a problem of the form (VCP) in the following way:

\begin{defn} \label{DefSolConc} Let $\delta>0$. For a problem of the form (VCP), a (finite) nonempty set $X \subseteq S$ is called a (finite) homogeneous $\delta$-solution with \textit{approximate upper image} $P_X := \conv F[X]+C$ if
\begin{equation*}
d_{tH}(\hom P_X, \hom \mathcal{P}) \leq \delta.
\end{equation*}
\end{defn}

This concept constitutes a straightforward application of homogeneous $\delta$-approximations to the context of convex vector optimization. The approximate upper image is an inner homogeneous $\delta$-approximation of the upper image.\par
In contrast to established solution concepts, the approach employs Euclidean distances as a sole measure of proximity between the upper image and its approximation instead of relying on notions of ($\varepsilon$-)minimality (see, for example, \cite{gutierrez2006approximate} or \cite{ehrgott2011approximation} for some notions of $\varepsilon$-minimality). We propose that a decision-maker is primarily concerned with understanding the overall structure of the upper image, with more detailed interpretation around individual points being a secondary consideration. To this end, the Euclidean distance sufficiently captures the proximity of the approximate upper image to the upper image. If the decision-maker wishes to identify (weakly) $C-$minimal elements or approximately (weakly) $C-$minimal elements, a refinement step using scalarization techniques can be applied, see e.g., \cite[Chapter 2]{eichfelderadaptive}.\par
In the upcoming part, a term for an upper bound on the absolute approximation error within a given region of interest (RoI), represented by a Euclidean ball around the origin  with chosen radius $r$, is derived. To be precise, given a homogeneous $\delta$-solution with approximate upper image $P_X$, we derive a term $\alpha(r,\delta)$ with $d(y, \bd \mathcal{P}) \leq \alpha(r,\delta)$ for $y \in B_r(0) \cap \bd P_X$. Note that shifting the upper image accordingly enables the decision-maker to choose another centre than the origin for their RoI (see Remark \ref{rem:translation}). Figure \ref{del0.1} shows how the graph of the error term is presented to the decision-maker. The points on the graph represent the upper bound $\alpha(r,\delta)$ on the absolute error (vertical axis) within the RoI with radius $r$ (horizontal axis) given a fixed value of the precision parameter $\delta$. Based on the behaviour of this error term for a given choice of $\delta$, the decision-maker attempts to specify a RoI in which the approximation error $\alpha(r,\delta)$ can be bounded by a satisfying value. If the decision-maker obtains a satisfactory point on the graph, the given value of $\delta$ can be used for the homogeneous $\delta$-approximation. Otherwise, they are presented with an adjusted graph for smaller values of $\delta$, until a satisfying trade-off is obtained on the graph. It should also be taken into consideration that reducing the value of $\delta$ increases the computational effort for the approximation.\par
The error bound $\alpha(r,\delta)$ tends to infinity as the radius $r$ approaches the value $R_\delta$ in Figure \ref{del0.1}. Thus the error bound is only valid within a certain \textit{region of validity} of radius $R_\delta$ around the origin. The size of the region of validity $R_\delta$ is dependent on the precision parameter $\delta$ and increases as $\delta$ decreases.

\begin{figure}[H]
	\centering
\begin{tikzpicture}[scale = 0.7]
	\begin{axis}[
		width=\textwidth,
		height=6cm,
		grid=major,
		xlabel={$r$},
		ylabel={$\alpha(r,\delta)$},
		xmin=0, xmax=10.5,
		ymin=0, ymax=85,
		title={$\delta = 0{.}1$},
		domain=0.01:10,
		samples=1000,
		]
		
		\addplot [
		domain=0.01:9.9,
		samples=1000,
		line width=2pt,
		fmi
		] {(0.1*(x^2 + 1))/(sqrt(1 - 0.1^2) - 0.1*x)};
		\draw[dashed, black] (axis cs:9.9499,0) -- (axis cs:9.9499,100);
		\node[black, anchor=south east] at (axis cs:9.9499,5) {\large{$R_\delta$}};
		
	\end{axis}
\end{tikzpicture}
	\caption{Error bound $\alpha(r,\delta)$ within the region of interest depending on $r$ for $\delta = 0.1$. $R_\delta$ marks the radius for the region of validity for the error bound.}
	\label{del0.1}
\end{figure}
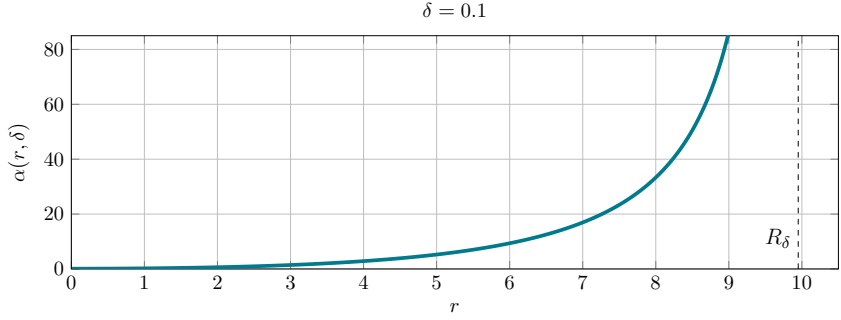

Given the selected value of $\delta$, a corresponding approximation of the upper image is generated that gives the desired precision in the RoI, as well as a global overview beyond that. The decision-maker can iteratively explore the properties of the upper image by deciding to shift or enlarge their RoI or change the tolerated error bound within this region. More details on the decision-making procedure, as well as an example, are shown in Section \ref{DMprocedure}.\par
We want to further motivate the concept with its wide applicability. Theorem \ref{homExists} states that under no additional requirements a finite homogeneous $\delta$-solution always exists. For the proofs of the results in the remainder of this section, we refer the reader to Section \ref{proofs}.

\begin{theorem} \label{homExists}
    Let a problem of the form (VCP) be given. Then, for every $\delta > 0$, a finite homogeneous $\delta$-solution of (VCP) exists.
\end{theorem}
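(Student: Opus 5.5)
We will reduce the statement to a compactness argument on the truncated homogenization of the upper image. Set $K := \hom \mathcal{P}$ (note $\mathcal{P}$ is nonempty, closed and convex, so Proposition \ref{rock} applies). For any nonempty $X \subseteq S$ we have $P_X = \conv F[X] + C \subseteq F[S] + C \subseteq \mathcal{P}$. Hence $\hom P_X \subseteq K$, and the excess $e[\hom P_X \cap \mathbb{B}, K \cap \mathbb{B}]$ vanishes. It therefore suffices to find a finite $X$ with
\[
e[K \cap \mathbb{B}, \hom P_X \cap \mathbb{B}] \leq \delta .
\]

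\textbf{Step 1 (dense generators).} By Proposition \ref{rock}, $K = \cl \cone(\mathcal{P} \times \{1\})$. Since $\mathcal{P} = \cl(F[S]+C)$, a limiting argument shows
\[
K = \cl \cone\bigl((F[S]+C)\times\{1\}\bigr) = \cl\Bigl(\cone(F[S]\times\{1\}) + (C\times\{0\})\Bigr).
\]
Here one uses $(y+c,1) = (y,1) + (c,0)$ and that $C$ is a cone.

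\textbf{Step 2 (compactness).} The set $K \cap \mathbb{B}$ is compact. Choose a finite $\tfrac{\delta}{3}$-net $\{k_1,\dots,k_N\}$ of it.
\begin{itemize}
\item For each $k_i$, Step 1 provides a point $\tilde k_i = \sum_j \lambda_{ij}(F(x_{ij}),1) + (c_i,0)$ with $\lambda_{ij} \geq 0$, $x_{ij} \in S$, $c_i \in C$, and $\norm{\tilde k_i - k_i} \leq \tfrac{\delta}{3}$.
\item Let $X$ be the finite set of all $x_{ij}$, together with one fixed feasible point $\bar x$ so that $X \neq \emptyset$.
\end{itemize}
Then $\tilde k_i \in \cone(F[X]\times\{1\}) + C\times\{0\}$. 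This set is contained in $\hom P_X$, since $(c,0)$ is a recession direction of $P_X \ni F(\bar x)$ and Proposition \ref{rock} applies to the closed set $\cl P_X$. Because $\hom$ takes closures anyway, $\hom P_X = \hom \cl P_X$.

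\textbf{Step 3 (back into the unit ball).} This is the main technical point. The approximant $\tilde k_i$ may lie slightly outside $\mathbb{B}$, but its radial projection $\tilde k_i / \max\{1,\norm{\tilde k_i}\}$ lies in $\hom P_X \cap \mathbb{B}$ and is within $\tfrac{\delta}{3}$ of $\tilde k_i$. Indeed $\norm{\tilde k_i} \leq 1 + \tfrac{\delta}{3}$, and projecting onto the ball is nonexpansive.

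Now take any $k \in K \cap \mathbb{B}$ and pick $k_i$ with $\norm{k - k_i} \leq \tfrac{\delta}{3}$. Then $k$ is within $\tfrac{\delta}{3}+\tfrac{\delta}{3}+\tfrac{\delta}{3} = \delta$ of $\hom P_X \cap \mathbb{B}$. This gives the required excess bound, so $d_{tH}(\hom P_X, \hom \mathcal{P}) \leq \delta$ and $X$ is a finite homogeneous $\delta$-solution.

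The only delicate points are the closure manipulations in Step 1 and the fact that the recession cone of $P_X$ contains $C$. No assumption on solidity, polyhedrality of $C$ or boundedness is needed, since compactness is used only on $\mathbb{B}$.
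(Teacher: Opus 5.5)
Your proof is correct, and it takes a genuinely different route from the paper's.

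\textbf{The paper's route.} The paper first uses an external polytope-approximation theorem for compact convex sets. This gives a finite $Y$ with $\conv Y\subseteq\hom\mathcal{P}\cap\mathbb{B}$ and $d_H(\conv Y,\hom\mathcal{P}\cap\mathbb{B})\leq\delta/2$. It then needs a dimension argument and a perturbation of $Y$ toward a relative interior point, giving $\bar Y\subseteq\relint(\hom\mathcal{P})$. Only then can it de-homogenize, using $\relint(\hom\mathcal{P})=\{\mu(x,1)^T \mid \mu>0,\ x\in\relint\mathcal{P}\}$ and $\relint\cl(F[S]+C)=\relint(F[S]+C)$. This produces genuine points $F(x^i)+c^i$, i.e.\ preimages $x^i$.

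\textbf{Your route.} You sidestep all of this by approximating from the dense subset $\cone(F[S]\times\{1\})+C\times\{0\}$ of $\hom\mathcal{P}$, so preimages come for free. Here you only need the easy inclusion $\cone((F[S]+C)\times\{1\})\subseteq\cone(F[S]\times\{1\})+C\times\{0\}$ together with $\cone(\cl M)\subseteq\cl\cone M$; the reverse "limiting argument" is unnecessary. The price is twofold, and you handle both parts correctly:
\begin{itemize}
\item Approximants may leave $\mathbb{B}$; your radial rescaling stays inside the cone.
\item The $C\times\{0\}$ part must lie in $\hom P_X$. This holds because $C\subseteq 0^+P_X$ and $\hom P_X$ is a closed convex cone.
\end{itemize}

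\textbf{What each buys.} Your argument is more elementary, self-contained, and works uniformly for all $\delta>0$. The paper's route gives a somewhat stronger byproduct: the polytope $\conv\{\bar y^i/\bar\mu^i\}\subseteq P_X$ has homogenization $\cone\bar Y$, which alone is already within $\delta$ of $\hom\mathcal{P}$. So the paper obtains a bounded homogeneous $\delta$-approximation in the sense of Definition \ref{defHomDelApprox}, even when $C$ is not polyhedral.

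\textbf{Minor points.}
\begin{itemize}
\item $K=\cl\cone(\mathcal{P}\times\{1\})$ is the definition of $\hom$, not Proposition \ref{rock}.
\item The inclusion $\conv F[X]\subseteq F[S]+C$ uses convexity of $F[S]+C$. This follows from $C$-convexity of $F$ and convexity of $S$, and deserves a line.
\item In Step 3, apply nonexpansiveness of the projection onto $\mathbb{B}$ to $\tilde k_i$ and $k_i\in\mathbb{B}$ directly. This already places the projected point within $\delta/3$ of $k_i$, so your total error is $2\delta/3$ rather than $\delta$.
\end{itemize}
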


We now turn to the relationship between the truncated Hausdorff-distance of the homogenized sets and the error bound for the distance between their de‑homogenized counterparts. For a problem (VCP), let $X$ be a homogeneous $\delta$-solution for some $\delta \in (0,1)$ with approximate upper image $P_X = \conv F[X] + C$. We will assume $\delta \in (0,1)$ throughout the rest of this work as the two excluded cases, $\delta=0$ and $\delta = 1$, are trivial: If $\delta = 0$, then $P_X = \mathcal{P}$ provided that $\mathcal{P}$ is polyhedral and otherwise no homogeneous $\delta$-approximation exists in general. If $\mathcal{P}$ is polyhedral, choosing $\delta = 0$ only makes sense if we seek an exact representation of the upper image. This is not the focus of this work. If $\delta = 1$, the homogeneous $\delta$-approximation is not meaningful since $d_{tH}(K_1,K_2) \leq 1$ holds true for any convex cones $K_1$ and $K_2$ according to the definition of the truncated Hausdorff-distance. \par
We analyse boundary points $y$ of the approximate upper image with respect to their proximity to weakly $C-$minimal elements for the problem (VCP). It is implied by the characterization in Proposition \ref{rock} that for points $y$ that are sufficiently far from the origin, i.e. $\left \lVert y \right \rVert$ large enough, it is no longer guaranteed that the minimal truncated Hausdorff-distance is attained with respect to a boundary point of the upper image rather than a direction of its recession cone. Consequently, we obtain a so-called region of validity, an open Euclidean ball of radius
$$R_{\delta }:=\sqrt{\frac{1}{\delta ^2}-1}$$
around the origin. In this region, the error bound on the distance to a boundary point of the upper image presented in Theorem \ref{absch} is ensured to hold.\par
Theorem \ref{absch} presents the upper bound $\alpha(r,\delta)$ on the Euclidean distance between any point $y \in \bd P_X \cap B_r(0)$ with $0 < r < R_\delta$ and a boundary point of the upper image of (VCP), corresponding to a weakly $C-$minimal element of (VCP) under feasibility. To this end, $B_r(0)$ is interpreted as the RoI for the decision-maker. The bound $\alpha(r, \delta)$ can be expressed explicitly as a function of $r$ and $\delta$.

\begin{theorem}\label{absch} Let $X$ be a homogeneous $\delta$-solution of (VCP) for $\delta \in (0,1)$ with approximate upper image $P_X = \conv F[X] + C$. Let $0<r<R_\delta$ and $y \in \bd P_X \cap B_r(0)$. Then the absolute approximation error in $y$ can be bounded in the following way:

$$d(y,\bd\mathcal{P}) \leq \frac{\delta(r^2+1)}{\sqrt{1-\delta^2}-\delta r} = : \alpha(r,\delta).$$
\end{theorem}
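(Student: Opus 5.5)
The plan is to argue by contraposition using only the excess $e[\hom\mathcal{P}\cap\mathbb{B},\hom P_X\cap\mathbb{B}]\leq\delta$, together with an elementary planar angle estimate. That estimate is where the formula $\alpha(r,\delta)$ and the validity radius $R_\delta$ come from.

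\textbf{Reduction.} First note that $P_X\subseteq\mathcal{P}$, since $F[X]\subseteq F[S]$ and $\mathcal{P}$ is convex. Hence $y\in\mathcal{P}$. If $y\in\bd\mathcal{P}$ there is nothing to show. So assume $y\in\int\mathcal{P}$ and, for contradiction, that $d(y,\bd\mathcal{P})>\alpha:=\alpha(r,\delta)$. Then $B_{\alpha'}(y)\subseteq\mathcal{P}$ for some $\alpha'>\alpha$. Since $y\in\bd P_X$ and $P_X$ is convex, there is a unit vector $u$ with $u^Tx\leq u^Ty$ for all $x\in P_X$.

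\textbf{The separating cone.} Homogenizing this supporting half-space gives a closed half-space cone
\[
H:=\{(x,s)\in\mathbb{R}^{m+1}\mid u^Tx-(u^Ty)s\leq 0\}.
\]
It contains $P_X\times\{1\}$, hence $\hom P_X\subseteq H$, and its boundary hyperplane passes through $(y,1)$. Now consider $z:=y+\alpha' u\in\mathcal{P}$ and its normalization $\hat z:=(z,1)/\sqrt{\norm{z}^2+1}\in\hom\mathcal{P}\cap\mathbb{B}$. By the excess bound there is $w\in\hom P_X\cap\mathbb{B}\subseteq H$ with $\norm{\hat z-w}\leq\delta$. Since $\hat z$ is a unit vector, the angle $\theta$ between the ray through $\hat z$ and the hyperplane $\bd H$ then satisfies $\sin\theta\leq\delta$, that is, $\tan\theta\leq\delta/\sqrt{1-\delta^2}$. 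The goal is to contradict this by a lower bound on that angle.

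\textbf{Key planar estimate.} I would prove the following lemma. Let $(y,1)$ have angle $\varphi$ to the vertical axis, so $\tan\varphi=\norm{y}\leq r$. Suppose a level-$1$ point $z$ is such that the ray through $(z,1)$ makes angle at most $\theta$ with a hyperplane through the origin and $(y,1)$. Moreover $z-y$ is the normal component along $u$. Then
\[
\norm{z-y}\leq\tan(\varphi+\theta)-\tan\varphi=\frac{\tan\theta\,(1+\tan^2\varphi)}{1-\tan\varphi\tan\theta},
\]
provided $\tan\varphi\tan\theta<1$. With $\tan\theta=\delta/\sqrt{1-\delta^2}$ and $\tan\varphi\leq r$, the right-hand side is increasing in $\tan\varphi$ and becomes exactly $\delta(r^2+1)/(\sqrt{1-\delta^2}-\delta r)$. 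The condition $\tan\varphi\tan\theta<1$ is precisely $r<R_\delta=\sqrt{1/\delta^2-1}$.

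Applying this to $z=y+\alpha'u$ gives $\alpha'\leq\alpha$, a contradiction, so $d(y,\bd\mathcal{P})\leq\alpha(r,\delta)$.

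\textbf{Main obstacle.} I expect the lemma to be the hardest step. One must show that the worst case is the two-dimensional configuration: the plane spanned by the vertical axis and $(y,1)$, with the hyperplane tilted maximally away from the axis. This requires handling general orientations of $u$ relative to $y$. I would first try to parametrize by the unit normal $(u,-u^Ty)/\sqrt{1+(u^Ty)^2}$ of $\bd H$ and compute the sine of the angle of $(y+tu,1)$ to $\bd H$ explicitly as
\[
\frac{t}{\sqrt{1+(u^Ty)^2}\,\sqrt{\norm{y+tu}^2+1}}.
\]
I would then bound the denominator using $\norm{y}\leq r$ and reduce the resulting inequality $\leq\delta$ to a quadratic inequality in $t$. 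Solving that inequality should give $t\leq\alpha(r,\delta)$, with positivity of $\sqrt{1-\delta^2}-\delta r$ ensuring that the relevant root exists.
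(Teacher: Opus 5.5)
Your argument is correct, and it takes a genuinely different route from the paper.

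\textbf{The step you flag as the main obstacle is immediate.} Put $t=\alpha'$ into your formula for $\sin\theta$. Note that $\hat z\notin H$, since $u^Tz-u^Ty=\alpha'>0$, so $d(\hat z,H)$ is indeed $\sin\theta$. Now bound the two factors of the denominator crudely, using $|u^Ty|\leq\norm{y}\leq r$ and $\norm{y+tu}\leq r+t$. Then $\sin\theta\leq\delta$ yields
\[
t^2\leq\delta^2(1+r^2)\bigl((r+t)^2+1\bigr).
\]
This is exactly the planar configuration with $\tan\varphi=r$, so no case analysis of the orientation of $u$ relative to $y$ is needed. The leading coefficient $1-\delta^2(1+r^2)$ of this convex quadratic inequality is positive precisely when $r<R_\delta$. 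Its constant term is negative, and its discriminant simplifies to $4\delta^2(1+r^2)^2(1-\delta^2)$. Hence the solution set is an interval whose right endpoint is
\[
\frac{\delta(1+r^2)(\sqrt{1-\delta^2}+\delta r)}{1-\delta^2-\delta^2r^2}=\alpha(r,\delta).
\]
This gives $\alpha'\leq\alpha(r,\delta)$, the required contradiction.

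\textbf{How the paper proceeds.} The paper goes through Lemma \ref{lemma}. It passes to boundaries via the Hausdorff-distance result of Wills and obtains a boundary point of $\hom\mathcal{P}$ angularly within $\delta$ of $(y,1)^T$. It uses $\norm{y}<R_\delta$ to rule out that this point lies at level $0$, i.e. is a recession direction. It then computes $\norm{n-y}$ in the plane spanned by $(y,1)^T$ and $(n,1)^T$ as a function of the angular distance $q$, and finishes by monotonicity in $q$ and $\norm{y}$.

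\textbf{What your route buys.}
\begin{itemize}
\item It uses only the excess of $\hom\mathcal{P}\cap\mathbb{B}$ over $\hom P_X\cap\mathbb{B}$, together with $P_X\subseteq\mathcal{P}$.
\item It needs neither the boundary version of the Hausdorff distance nor a separate treatment of directions. The role of $R_\delta$ reduces to positivity of the leading coefficient.
\item It handles $\mathcal{P}=\mathbb{R}^m$ automatically.
\item It justifies the planar worst case in any dimension by two elementary inequalities.
\item It gives slightly more. You only use $y+\alpha'u\in\mathcal{P}$, so the ray from $y$ along the outer normal $u$ of $P_X$ at $y$ leaves $\mathcal{P}$ within distance $\alpha(r,\delta)$.
\end{itemize}

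\textbf{What the paper's route buys.} It produces an explicit point $n\in\bd\mathcal{P}$ and the finer $q$-dependent quantity $\alpha_{\norm{y},q}$. These are reused in Theorem \ref{oneOrTwo} for boundary points outside the region of validity.
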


\begin{remark}
    From the proof of Theorem \ref{absch}, one can directly construct a worst‑case example in which $d(y,\mathcal{P})= \alpha(r,\delta)$ under the stated assumptions. This demonstrates that the bound in Theorem \ref{absch} is sharp.
\end{remark} \par

\begin{remark}
	\label{rem:translation}
	Theorem \ref{absch} is stated for points $y \in \bd P_X \cap B_r(0)$. However, the result extends directly to balls centered at arbitrary points $p$ in the following way. If a homogeneous $\delta$-solution is computed for the translated upper image  $\mathcal{P} - \{p\}$, then Theorem \ref{absch} applies to points $y \in B_r(p)$ in the original setting by considering $y-p \in B_r(0)$ on the boundary of the approximation in the translated setting. This translation is used in Section \ref{DMprocedure} when working with user-defined regions of interest $B_r(p)$.
\end{remark}

We now provide an interpretation for boundary points of the approximate upper image that are not covered by the assumptions of Theorem \ref{absch}. Theorem \ref{oneOrTwo} shows that approximate boundary points $y \in \bd P_X$ outside the region of validity $U_{R_\delta}(0)$ are close to weakly $C-$minimal elements of (VCP) under feasibility or close to recession directions of the upper image. 

\begin{theorem} \label{oneOrTwo}
Let $X$ be a homogeneous $\delta$-solution of (VCP) for $\delta \in (0,1)$ with approximate upper image $P_X = \conv F[X] + C$. Let $y \in \bd P_X$. Then, one of the following holds true:
\begin{enumerate}[label=(\roman*)]
    \item \label{itm:i} For some $n \in \bd \mathcal{P}$ and for $\alpha_{\left\lVert y \right\rVert, q}$ as defined in Equation \eqref{gleichheit} for Theorem \ref{absch}
    $$d(y, \bd \mathcal{P}) \leq \alpha_{\left\lVert y \right\rVert, q} \text{ with } q = d\left(\frac{(n,1)^T}{\left\lVert (n,1)^T \right\rVert}, \cone\{(y,1)^T\}\right) \leq \delta,$$ 
    \item \label{itm: ii} $d\left(\frac{y}{\left\lVert y \right\rVert}, 0^+ \mathcal{P}\right) \leq \frac{\left\lVert (y, 1)^T \right\rVert}{\left\lVert y \right\rVert} \delta$.
\end{enumerate}
\end{theorem}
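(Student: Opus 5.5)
Let $u := (y,1)^T/\lVert (y,1)^T\rVert$. Since $y \in P_X \subseteq \hom P_X$-level one, we have $u \in \hom P_X \cap \mathbb{B}$. The plan is to apply the truncated Hausdorff bound to $u$. By Definition \ref{DefSolConc} there is a $w \in \hom\mathcal{P} \cap \mathbb{B}$ with $\lVert u - w\rVert \le \delta$.

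Because $y \in \bd P_X$, I would first improve this to a point on the boundary of $\hom\mathcal{P}$. If $u \in \hom \mathcal{P}$, then $y \in \mathcal{P}$; a supporting hyperplane argument for $P_X \subseteq \mathcal{P}$ (an inner approximation) gives the case distinction below. If $u \notin \hom\mathcal{P}$, the nearest point $w$ lies in $\bd\hom\mathcal{P}$. In either case $w$ can be taken as a boundary point of $\hom\mathcal{P}\cap\mathbb{B}$. By Proposition \ref{rock}, $w = \lambda(n,1)^T$ with $n\in\mathcal{P}$ and $\lambda>0$, or $w = (d,0)^T$ with $d \in 0^+\mathcal{P}$. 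I expect boundary points of the cone with positive last coordinate to correspond to $n \in \bd\mathcal{P}$.

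\textbf{Case $w_{m+1}>0$.} We get $n \in \bd\mathcal{P}$ with the normalized ray $(n,1)^T/\lVert(n,1)^T\rVert$ at distance at most $\delta$ from $\cone\{(y,1)^T\}$. To see this, project $(n,1)^T/\lVert(n,1)^T\rVert$ onto the ray: the distance from a unit vector to a ray is at most its distance to any point of the ray within the ball, so it is bounded by $\lVert u-w/\lVert w\rVert\rVert$. This needs a small estimate, or else choosing $w$ differently, namely picking $n$ so that its normalized lift is closest to the ray. That quantity is $q\le\delta$. Then I would invoke the computation behind Theorem \ref{absch}, i.e.\ Equation \eqref{gleichheit}. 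There, given the angle $q$ between the rays through $(y,1)$ and $(n,1)$ and the norm $\lVert y\rVert$, an elementary planar trigonometric bound gives $\lVert y-n\rVert\le\alpha_{\lVert y\rVert,q}$. That bound is valid whenever the rays intersect level one appropriately. Since $d(y,\bd\mathcal{P})\le\lVert y-n\rVert$, this yields (i).

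\textbf{Case $w_{m+1}=0$.} Here $w=(d,0)^T$ with $d\in 0^+\mathcal{P}$ and $\lVert u-w\rVert\le\delta$. Dropping the last coordinate gives $\lVert y/\lVert(y,1)\rVert - d\rVert\le\delta$. Multiplying by $\lVert(y,1)\rVert/\lVert y\rVert$, and using that $0^+\mathcal{P}$ is a cone, $d\lVert(y,1)\rVert/\lVert y\rVert\in 0^+\mathcal{P}$. This gives (ii). The case $y=0$ is excluded, since then $u$ has last coordinate $1$ and (i) applies.

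\textbf{Main obstacle.} The hard step is the first case. There I must guarantee that the witness $n$ is a boundary point of $\mathcal{P}$ and that $\alpha_{\lVert y\rVert,q}$ is finite and applicable: the formula requires $q$ small relative to $\lVert y\rVert$, which is exactly what fails outside $U_{R_\delta}(0)$. If the witness ray through $(n,1)$ tilts so far that it meets level one far away, I would argue that the nearest point is instead attained in the limit at a direction, and fall back to case (ii). Concretely, I would take a minimizing sequence of boundary points $n_k$ and pass to the limit of normalized lifts. A limit with last coordinate zero yields (ii); otherwise, by closedness of $\mathcal{P}$, it yields (i).
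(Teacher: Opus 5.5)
\textbf{There is a genuine gap: the step that uses the hypothesis is missing.} You take $w$ from the excess of $\hom P_X\cap\mathbb{B}$ over $\hom\mathcal{P}\cap\mathbb{B}$. But $P_X\subseteq\mathcal{P}$ (it is an inner approximation, as you say), so $u\in\hom\mathcal{P}$ already. Hence $w=u$ is admissible, the branch ``$u\notin\hom\mathcal{P}$'' never occurs, and this half of $d_{tH}\leq\delta$ says nothing. Likewise, ``$w$ can be taken as a boundary point of $\hom\mathcal{P}\cap\mathbb{B}$'' is satisfied by $w=u$, because $u$ lies on the unit sphere. That choice gives $n=y$ and $q=0$, and $n\in\bd\mathcal{P}$ is then false whenever $y\notin\bd\mathcal{P}$. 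What you need is a point of $\bd(\hom\mathcal{P})$ (the boundary of the cone, not of its truncation) whose normalization lies within $\delta$ of $\cone\{(y,1)^T\}$. This must come from the other half of the hypothesis, $e[\hom\mathcal{P}\cap\mathbb{B},\hom P_X\cap\mathbb{B}]\leq\delta$, combined with $y\in\bd P_X$. Your sentence ``a supporting hyperplane argument \dots\ gives the case distinction below'' is exactly where this has to happen, and it is not carried out.

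\textbf{One way to close it.} Choose $v_0\neq 0$ with $v_0^Tx\leq v_0^Ty$ for all $x\in P_X$, and let $v$ be the normalization of $(v_0,-v_0^Ty)^T$. Then $v^Tz\leq 0$ on $\hom P_X$ and $v^Tu=0$. For $z\in\hom\mathcal{P}\cap\mathbb{B}$ and $w\in\hom P_X$ we have $\lVert z-w\rVert\geq v^T(z-w)\geq v^Tz$, so the excess bound gives $v^Tz\leq\delta$. Now follow the arc $z_t=\cos t\,u+\sin t\,v$ for $t\in[0,\pi/2]$. Every $z_t$ with $\sin t>\delta$ lies outside $\hom\mathcal{P}$, while $z_0=u$ lies inside. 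Take the largest $t^*$ with $z_{t^*}\in\hom\mathcal{P}$. The unit vector $z^*:=z_{t^*}$ then lies in $\bd(\hom\mathcal{P})$ and satisfies $d(z^*,\cone\{u\})=\lVert u-\cos t^*\,z^*\rVert=\sin t^*\leq\delta$. Your case split now applies to $z^*$:
\begin{itemize}
\item A positive last coordinate gives $n\in\bd\mathcal{P}$ with $q\leq\delta$, with no further estimate needed.
\item A zero last coordinate gives $\cos t^*\,z^*=(\bar d,0)^T$ with $\bar d\in 0^+\mathcal{P}$ and $\lVert u-(\bar d,0)^T\rVert\leq\delta$. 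Your argument for (ii), dropping the last coordinate and rescaling, is then correct and shorter than the paper's detour through $q$ and the Pythagorean identity.
\end{itemize}
The paper gets this step from Wills' theorem on Hausdorff distances of boundaries. The supporting-hyperplane argument gives it directly, and it lands in $\bd(\hom\mathcal{P})$ rather than in the boundary of the truncated set.

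\textbf{The limiting fallback.} Your minimizing-sequence fallback in the point case is not needed, and it does not resolve the issue you raise. Statement (i) only asks for some witness $n$ with its own $q\leq\delta$. The paper simply applies the computation leading to \eqref{gleichheit} to that witness, asserting that this computation does not use $\lVert y\rVert<R_\delta$ and letting the $\max$ cover both orientations. A limit with positive last coordinate would just be another point witness facing the same applicability question.
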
\par

Hence, for any point $y$ on the boundary of the approximate upper image of a given problem (VCP), one of the following holds: Its Euclidean distance to the boundary of the upper image, and thereby to a weakly $C-$minimal element of (VCP) under feasibility, can be bounded by an error term, or its distance to some direction of the upper image can be bounded. Given an approximate boundary point $y \in B_r(0)$ with $0<r<R_\delta$, the first certainly holds true as shown in Theorem \ref{absch}. Approximate boundary points further from the origin cannot be strictly attributed to approximating boundary points or recession directions of the upper image. They do, however, have proximity to one or the other and thereby hold valuable information about the global behaviour of the upper image.
\section{Decision-Making Procedure}\label{DMprocedure}
We now aim to clarify how the preceding results, specifically the error bound derived in Theorem \ref{absch}, can be applied in practice. We consider a decision-maker who seeks a general overview of the upper image of a given (VCP) and is particularly interested in its behaviour around a specific point $p \in \mathbb{R}^m$.

The first part of the procedure will be to define the RoI which is given as a Euclidean ball $B_r(p)$ with user-specified point of reference $p$ and radius $r$. For this, the initial reference point $p$ is chosen in one of two ways. Either there is a prevailing preference of the decision-maker, or an arbitrary choice is made that can be iteratively refined during the decision-making procedure similar to the description in Example \ref{example} (see specifically step 1 visualized in Figure \ref{fig:step1}). Based on the choice of $p$, the translation specified in Remark \ref{rem:translation} is applied. Note that whenever a new reference point is chosen, this translation is applied before any approximation is computed. \par
Next, the decision-maker is presented with the graph of the error term $\alpha(r,\delta)$ derived in Theorem \ref{absch}. The graph can be evaluated for different, selected values of $\delta$ (see Figure \ref{deltasDM}). We note that the reference point $p$ is used for the localization of the RoI, while the precision within this region is quantified by the error term $\alpha(r,\delta)$. This error term is controlled via the approximation parameter $\delta$.
\pgfplotsset{compat=1.18}

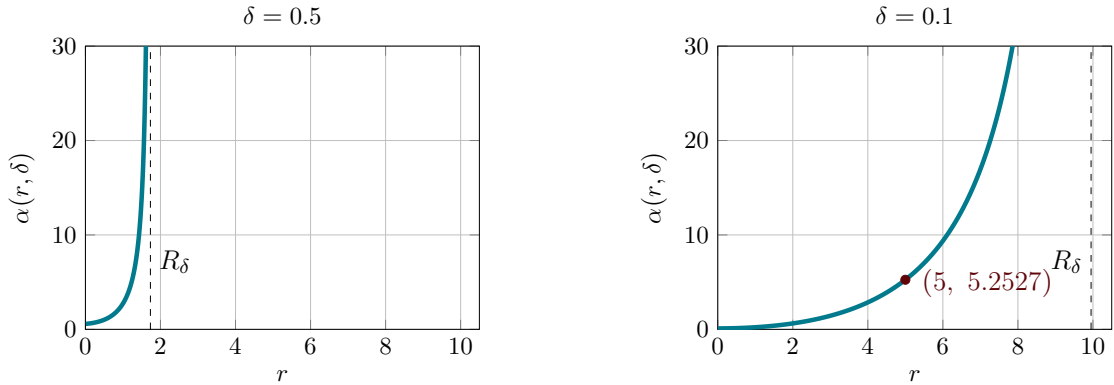
\begin{figure}[H]
	\centering
	
	\begin{minipage}{0.48\textwidth}
	\centering
	\begin{tikzpicture}[scale =0.85]
	\begin{axis}[
		width=\textwidth,
		height=6cm,
		grid=major,
		xlabel={$r$},
		ylabel={$\alpha(r,\delta)$},
		xmin=0, xmax=10.5,
		ymin=0, ymax=30,
		title={$\delta = 0{.}5$},
		domain=0.01:10,
		samples=1000,
		]
		\addplot [
		domain=0.01:1.73,
		samples=1000,
		line width=2pt,
		fmi
		] {(0.5*(x^2 + 1))/(sqrt(1 - 0.5^2) - 0.5*x)};
		\draw[dashed, black] (axis cs:1.732,0) -- (axis cs:1.732,30);
		\node[black, anchor=south west] at (axis cs:1.732,5) {\large{$R_\delta$}};
	\end{axis}
\end{tikzpicture}
	\end{minipage}
	\hfill
	\begin{minipage}{0.48\textwidth}
	\centering
\begin{tikzpicture}[scale=0.85]
	\begin{axis}[
		width=\textwidth,
		height=6cm,
		grid=major,
		xlabel={$r$},
		ylabel={$\alpha(r,\delta)$},
		xmin=0, xmax=10.5,
		ymin=0, ymax=30,
		title={$\delta = 0{.}1$},
		domain=0.01:10,
		samples=1000,
		]
		
		\addplot [
		domain=0.01:9.9,
		samples=1000,
		line width=2pt,
		fmi
		] {(0.1*(x^2 + 1))/(sqrt(1 - 0.1^2) - 0.1*x)};
		\draw[dashed, black] (axis cs:9.95,0) -- (axis cs:9.95,30);
		\node[black, anchor=south east] at (axis cs:9.95,5) {\large{$R_\delta$}};
		\addplot[
		only marks,
		mark=*,
		color=mydarkred
		] coordinates {(5, 5.2527)}; 
		
		\node[mydarkred, anchor=south west] at (axis cs:5.2, 2.5) {\large{$(5, \ 5.2527)$}};
		
	\end{axis}
\end{tikzpicture}
	\end{minipage}
	
	\caption{Error bound $\alpha(r,\delta)$ on the distance of boundary points of a homogeneous $\delta$-approximation to the boundary of the upper image (see Theorem \ref{absch}).
	The bound is given as a function of the radius $r$ for fixed values of large $\delta$ on the left and smaller $\delta$ on the right.
	The plots illustrate how the approximation error grows with the size of the RoI, with smaller $\delta$  yielding tighter bounds. The vertical dashed lines indicate the bound of the region of validity $R_\delta$ for each $\delta$, beyond which the error bound is undefined. As an example, the point on the right graph highlights a feasible trade-off between accuracy and coverage that is a possible choice for the decision-maker when setting $\delta = 0.1$. It means that, in an approximate upper image that is a homogeneous $0.1$-approximation, any boundary point in the RoI with radius 5 is in at most 5.2632 units distance of a weakly $C-$minimal element (given feasibility).
	}
	\label{deltasDM}
\end{figure} \par

The objective is to select the precision parameter $\delta$ as large as possible in order to minimize computational effort while simultaneously satisfying the decision-maker’s requirements. Notably, the choice of $\delta$ implicitly determines the radius for the region of validity $R_\delta>0$, which defines the extent beyond which no local error bound within a RoI can be guaranteed without adjusting $\delta$ (see Theorem \ref{absch}). Hence, for large values of $\delta$, less computational effort is necessary at the expense of a less defined picture (as can be seen by the steeper error curve on the left of Figure \ref{deltasDM}). The smaller the chosen value of $\delta$, the more comprehensive the representation of the upper image becomes.\par
Once a satisfactory point $(r, \alpha)$ is selected from the presentation in Figure \ref{deltasDM}, a homogeneous $\delta$-solution is computed for the corresponding value of $\delta$. This approximation will satisfy $d(y, \bd \mathcal{P}) \leq \alpha$ for any point $y$ on the boundary of the approximation within the RoI $B_r(p)$.

The decision-maker is subsequently presented with the approximation and, if desired, a corresponding homogeneous $\delta$-solution of (VCP). This result provides a comprehensive overview of the upper image, enabling the identification of new points of interest. By adjusting either the reference point or the value of $\delta$ — guided by the visualization in Figure \ref{deltasDM} — the decision-maker can iteratively refine their exploration of the upper image in subsequent approximation rounds. \par
From the final approximation, the decision-maker may select boundary points of interest, which can be refined to $C-$minimal or weakly $C-$minimal elements by solving a suitable scalarization of the problem (VCP), see, e.g., \cite[Chapter 2]{eichfelderadaptive}. This procedure also yields the pre-image of the resulting (weakly) $C-$minimal elements, corresponding to the values of the decision variables required to attain the desired target. 
We now proceed to illustrate this process with a concrete example.

\begin{example}\label{example}
Let the upper image of a (VCP) be given by 
$$\mathcal{P} := \left\{(x,y)^T \in \mathbb{R}^2 \: \middle| \: x^2 - 1 \leq y\right\} + \mathbb{R}^2_+.$$

\begin{figure}[H]
	\centering
	\begin{tikzpicture}[scale=0.6]
		\begin{axis}[
			width=10cm,
			height=10cm,
			xmin=-7.5, xmax=40,
			ymin=-3.5, ymax=40,
			xtick distance=15,
			ytick distance=15,
			xlabel={$x$},
			ylabel={$y$}
			]
			
			\draw[gray, thick, domain=-22:0, samples=100] plot (\x,{\x*\x - 1});
			
			\draw[gray, thick] (0,-1) -- (60,-1);
			
			\fill[gray!40]
			(-10,99) --
			plot[domain=-10:0, samples=100] (\x,{\x*\x - 1}) --
			(0,-1) -- (50,-1) -- (50,50) -- cycle;
			
			\node[gray, font=\bfseries] at (rel axis cs:0.5,0.5) {\Large{\(\mathcal{P}\)}};
			
		\end{axis}
	\end{tikzpicture}
	\caption{Bounded portion of the unbounded upper image in Example \ref{example}.}
	\label{fig:upperEx}
\end{figure}
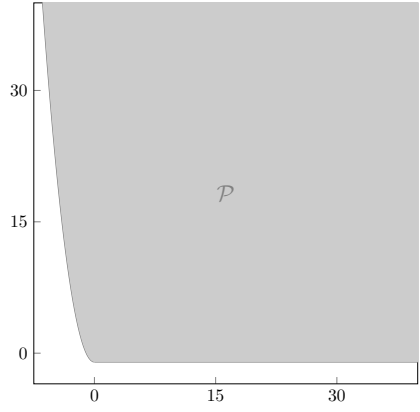

Naturally, the decision-maker cannot be presented with a visual representation like Figure \ref{fig:upperEx} since the upper image of a (VCP) is in general arbitrarily difficult to visualize. Hence, the goal is to obtain a tractable approximation. Let the decision-maker proceed as specified above. To locate the upper image and obtain a rough, initial impression, they select a relatively large value of $\delta$ together with an arbitrary reference point. For instance, consider the approximation with $\delta = 0.9$ within the RoI $B_{0.4}((0,-20)^T)$. In this case, the initial reference point is $p=(0,-20)^T$. Note that the intersection of the RoI and the set $\mathcal{P}$ is empty. Nevertheless, as illustrated in Figure \ref{fig:step1}, the homogeneous $\delta$-approximation provides a preliminary impression, thereby enabling the decision-maker to redefine the RoI in a more meaningful way.

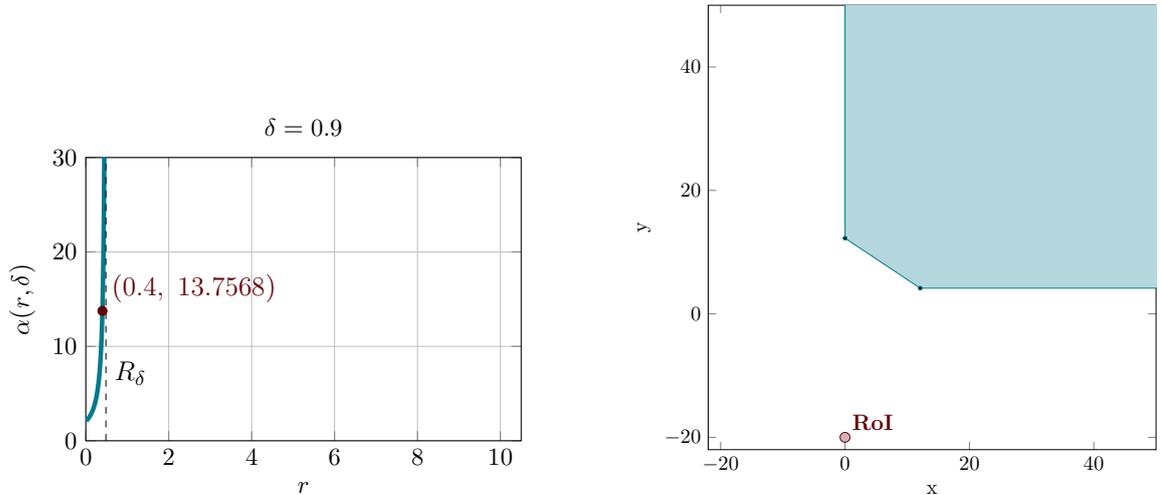
\begin{figure}[H]
	\begin{minipage}[t]{0.52\textwidth}
	\begin{subfigure}[t]{\textwidth}
	\centering

\begin{tikzpicture}[scale=0.85]
	\begin{axis}[
		width=\textwidth,
		height=6cm,
		grid=major,
		xlabel={$r$},
		ylabel={$\alpha(r,\delta)$},
		xmin=0, xmax=10.5,
		ymin=0, ymax=30,
		title={$\delta = 0{.}9$},
		domain=0.01:10,
		samples=1000,
		]
		
		\addplot [
		domain=0.01:0.484,
		samples=1000,
		line width=2pt,
		fmi
		] {(0.9*(x^2 + 1))/(sqrt(1 - 0.9^2) - 0.9*x)};
		\draw[dashed, black] (axis cs:0.484,0) -- (axis cs:0.484,100);
		\node[black, anchor=south west] at (axis cs:0.484,5) {\large{$R_\delta$}};
		\addplot[
		only marks,
		mark=*,
		color=mydarkred
		] coordinates {(0.4, 13.75677)}; 
		
		\node[mydarkred, anchor=south west] at (axis cs:0.4,13.75677) {\large{$(0.4,\ 13.7568)$}};
		
	\end{axis}
\end{tikzpicture}
	\caption{Chosen trade-off between size of the region of interest (RoI) ($r = 0.4)$ and upper bound on the error within that region ($\alpha(0.4,0.9) = 13.7568$) for given $\delta = 0.9$.}
	\end{subfigure}
	\end{minipage}\hfill
	\begin{minipage}[t]{0.44\textwidth}
	\begin{subfigure}[t]{\textwidth}
	\centering
	\begin{tikzpicture}[scale=0.7]
	\begin{axis}[
		xlabel={x},
		ylabel={y},
		width=10cm,
		height=10cm,
		xmin=-22, xmax=50,
		ymin=-22, ymax=50,
		xtick distance=20,
		ytick distance=20
		]
		
		\addplot[
		fill=fmi!30,
		draw=fmi,
		mark=*,
		mark size=1pt,
		mark options={fill=black}
		]
		table [x index=0, y index=1, col sep=comma] {Pictures/Vertices/STEP1_C.csv};
		
		\draw[draw=mydarkred, fill=mydarkred, fill opacity=0.3] 
		(axis cs:0,-20) circle (0.8)
		node[above right, text=mydarkred,opacity=1] {\large{\textbf{RoI}}};
		
	\end{axis}
\end{tikzpicture}
	\caption{Approximate upper image for $\delta = 0.9$ in the Example \ref{example} with centre of the approximation defined by the chosen RoI $B_{0.4}((0,-20)^T)$.}
	\end{subfigure}
	\end{minipage}\hfill
	\caption{Step 1 of the decision-making procedure. Locating the upper image and generating a rough impression using a high value for $\delta$ ($\delta = 0.9$) and an initial reference point ($p = (0,-20)^T$).}
	\label{fig:step1}
\end{figure}

Given this impression, the decision-maker may choose to shift their RoI. For instance, let them choose the reference point $(0,0)^T$ and a radius of $1.5$. With $\delta = 0.5$, Figure \ref{fig:step2} shows that the maximum error within the RoI is approximately $14$. Approximating the upper image with $\delta =0.5$ now yields a more refined representation, as illustrated on the right in Figure \ref{fig:step2}. Since the intersection of the upper image and the RoI is nonempty, the RoI now contains points for which the error bound is valid. Moreover, substantial information about the global behaviour of the upper image is conveyed by vertices outside of the RoI.\par
Finally, if the decision-maker is satisfied with the location of their RoI, the approximation can be further refined, for example by choosing $\delta = 0.1$ and the RoI $B_{5}(0)$ (see, Figure \ref{fig:step3}) or $\delta = 0.01$ and the RoI $B_{50}(0)$ (see, Figure \ref{fig:step4}). In Figure \ref{fig:step4}, a highly detailed depiction is obtained. Within the RoI, and especially close to the origin, the distance to weakly $C-$minimal elements is small. Outside of the RoI, the global behaviour is revealed through vertices approximating recession directions of the upper image and vertices capturing the characteristic parabolic shape of the upper image on the left side.\par
When the decision-maker is content with the outcome of their exploration, they are in a position to make a well-informed choice. They can select points from the boundary of the approximate upper image and refine them to $C-$minimal or weakly $C-$minimal elements for the given problem, see e.g. \cite[Chapter 2]{eichfelderadaptive}.

\begin{figure}[H]
	\begin{minipage}[t]{0.52\textwidth}
	\begin{subfigure}[t]{\textwidth}
	\centering
\begin{tikzpicture}[scale = 0.85]
	\begin{axis}[
		width=\textwidth,
		height=6cm,
		grid=major,
		xlabel={$r$},
		ylabel={$\alpha(r,\delta)$},
		xmin=0, xmax=10.5,
		ymin=0, ymax=30,
		title={$\delta = 0{.}5$},
		domain=0.01:10,
		samples=1000,
		]
		\addplot [
		domain=0.01:1.73,
		samples=1000,
		line width=2pt,
		fmi
		] {(0.5*(x^2 + 1))/(sqrt(1 - 0.5^2) - 0.5*x)};
		\draw[dashed, black] (axis cs:1.732,0) -- (axis cs:1.732,30);
		\node[black, anchor=south west] at (axis cs:1.732,5) {\large{$R_\delta$}};
		\addplot[
		only marks,
		mark=*,
		color=mydarkred
		] coordinates {(1.5, 14.006)}; 
		
		\node[mydarkred, anchor=south west] at (axis cs:1.7, 14.006) {\large{$(1.5, 14.0056)$}};
		
	\end{axis}
\end{tikzpicture}
	\caption{Chosen trade-off between size of the region of interest (RoI) ($r = 1.5)$ and upper bound on the error within that region ($\alpha(1.5,0.5) = 14.0056$) for given $\delta = 0.5$.}
	\end{subfigure}
	\end{minipage}\hfill
	\begin{minipage}[t]{0.44\textwidth}
	\begin{subfigure}[t]{\textwidth}
	\centering
	\begin{tikzpicture}[scale = 0.7]
	\begin{axis}[
		xlabel={x},
		ylabel={y},
		width=10cm,
		height=10cm,
		xmin=-22, xmax=50,
		ymin=-22, ymax=50,
		xtick distance=20,
		ytick distance=20
		]
		
		\addplot[
		fill=fmi!30,
		draw=fmi,
		mark=*,
		mark size=1pt,
		mark options={fill=black}
		]
		table [x index=0, y index=1, col sep=comma] {Pictures/Vertices/STEP2_C.csv};
		
		\draw[draw=mydarkred, fill=mydarkred, fill opacity=0.3] 
		(axis cs:0,0) circle (1.8)
		node[below = 4mm, right = 2mm, text=mydarkred, opacity=1] {\large{\textbf{RoI}}};
		
	\end{axis}
\end{tikzpicture}
	\caption{Approximate upper image for $\delta = 0.5$ in the Example \ref{example} with centre of the approximation defined by the chosen RoI $B_{1.5}((0,0)^T)$.}
	\end{subfigure}
	\end{minipage}\hfill
	\caption{Step 2 of the decision-making procedure. Refining the choice of reference point ($p = (0,0)^T$) to obtain a global impression of the upper image using a slightly lower value of $\delta$ ($\delta = 0.5$).}
	\label{fig:step2}
\end{figure}
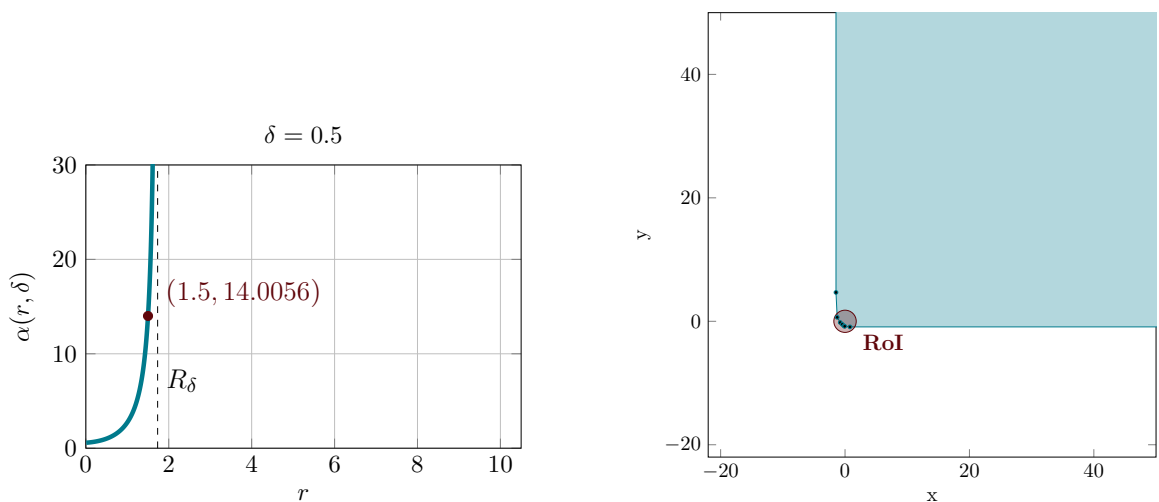
 
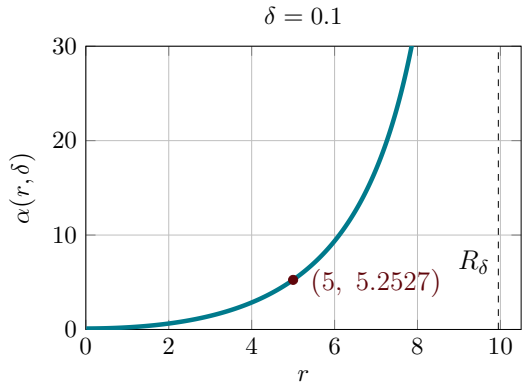
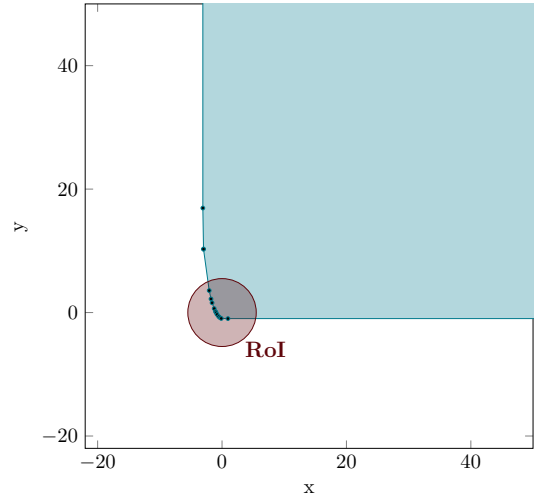
\begin{figure}[H]
	\begin{minipage}[t]{0.52\textwidth}
	\begin{subfigure}[t]{\textwidth}
	\centering
\begin{tikzpicture}[scale=0.85]
	\begin{axis}[
		width=\textwidth,
		height=6cm,
		grid=major,
		xlabel={$r$},
		ylabel={$\alpha(r,\delta)$},
		xmin=0, xmax=10.5,
		ymin=0, ymax=30,
		title={$\delta = 0{.}1$},
		domain=0.01:10,
		samples=1000,
		]
		
		\addplot [
		domain=0.01:9.9,
		samples=1000,
		line width=2pt,
		fmi
		] {(0.1*(x^2 + 1))/(sqrt(1 - 0.1^2) - 0.1*x)};
		\draw[dashed, black] (axis cs:9.95,0) -- (axis cs:9.95,30);
		\node[black, anchor=south east] at (axis cs:9.95,5) {\large{$R_\delta$}};
		\addplot[
		only marks,
		mark=*,
		color=mydarkred
		] coordinates {(5, 5.2527)}; 
		
		\node[mydarkred, anchor=south west] at (axis cs:5.2, 2.5) {\large{$(5, \ 5.2527)$}};
		
	\end{axis}
\end{tikzpicture}
	\caption{Chosen trade-off between size of the region of interest (RoI) ($r = 5)$ and upper bound on the error within that region ($\alpha(5,0.1) = 5.2527$) for given $\delta = 0.1$.}
	\end{subfigure}
	\end{minipage}\hfill
	\begin{minipage}[t]{0.44\textwidth}
	\begin{subfigure}[t]{\textwidth}
	\centering
	\begin{tikzpicture}[scale = 0.7]
	\begin{axis}[
		xlabel={x},
		ylabel={y},
		width=10cm,
		height=10cm,
		xmin=-22, xmax=50,
		ymin=-22, ymax=50,
		xtick distance=20,
		ytick distance=20
		]
		
		\addplot[
		fill=fmi!30,
		draw=fmi,
		mark=*,
		mark size=1pt,
		mark options={fill=black}
		]
		table [x index=0, y index=1, col sep=comma] {Pictures/Vertices/STEP3_C.csv};
		
		\draw[draw=mydarkred, fill=mydarkred, fill opacity=0.3] 
		(axis cs:0,0) circle (5.5)
		node[below = 7mm, right = 3mm, text=mydarkred, opacity=1] {\large{\textbf{RoI}}};
		
	\end{axis}
\end{tikzpicture}
	\caption{Approximate upper image for $\delta = 0.1$ in the Example \ref{example} with centre of the approximation defined by the chosen RoI $B_{5}((0,0)^T)$.}
	\end{subfigure}
	\end{minipage}\hfill
	\caption{Step 3 of the decision-making procedure. Obtaining a refined impression of the upper image using a low value of $\delta$ ($\delta = 0.1$).}
	\label{fig:step3}
\end{figure}

\begin{figure}[H]
	\centering
	\begin{minipage}[t]{0.48\textwidth}
\begin{tikzpicture}[scale=0.85]
	\begin{axis}[
		xlabel={x},
		ylabel={y},
		width=\textwidth,
		height=\textwidth,
		xmin=-22, xmax=60,
		ymin=-22, ymax=60,
		xtick distance=20,
		ytick distance=20
		]
		
		\addplot[
		fill=fmi!30,
		draw=fmi,
		mark=*,
		mark size=1pt,
		mark options={fill=black}
		]
		table [x index=0, y index=1, col sep=comma] {Pictures/Vertices/STEP4_C.csv};
		
		\draw[draw=mydarkred, fill=mydarkred, fill opacity=0.3] 
		(axis cs:0,0) circle (50.8)
		node[above=22mm, right=32mm, text=mydarkred, opacity=1] {\large{\textbf{RoI}}};
	\end{axis}
\end{tikzpicture}
	\end{minipage}\hfill
	\begin{minipage}[t]{0.48\textwidth}
	\begin{tikzpicture}[scale=0.85]
	\begin{axis}[
		xlabel={x},
		ylabel={y},
		width=\textwidth,
		height=\textwidth,
		xmin=-40, xmax=500,
		ymin=-35, ymax=500,
		xtick distance=100,
		ytick distance=100
		]
		\addplot[
		fill=fmi!30,
		draw=fmi,
		mark=*,
		mark size=1pt,
		mark options={fill=black}
		]
		table [x index=0, y index=1, col sep=comma] {Pictures/Vertices/STEP4_C.csv};
		
		\draw[draw=mydarkred, fill=mydarkred, fill opacity=0.3] 
		(axis cs:0,0) circle (50.8)
		node[above=7mm, right=10mm, text=mydarkred, opacity=1] {\large{\textbf{RoI}}};
	\end{axis}
\end{tikzpicture}
	\end{minipage}
	\caption{Approximate upper image for $\delta = 0.01$ in the Example \ref{example} with centre of the approximation defined by the chosen RoI $B_{50}((0,0)^T)$. Zoomed in (left) and zoomed out (right).}
	\label{fig:step4}
\end{figure}
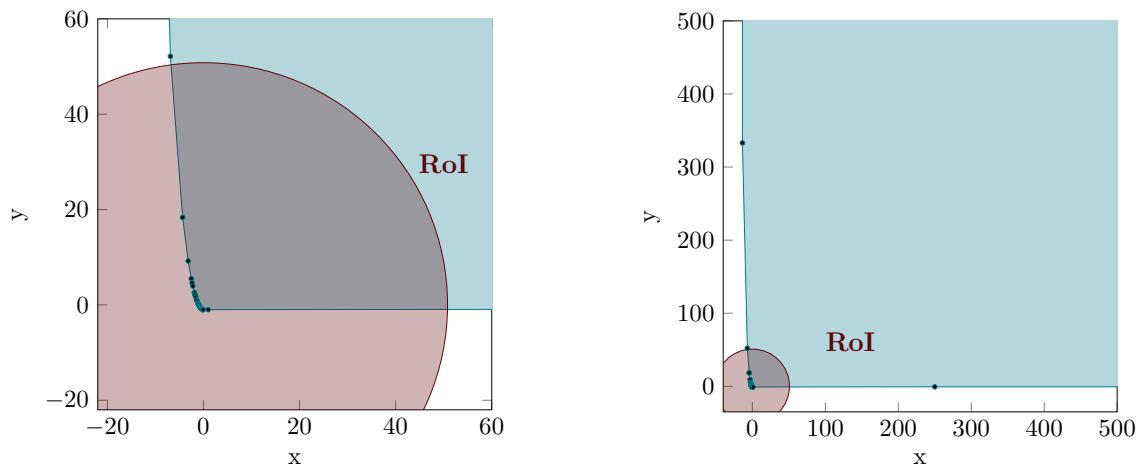
\end{example}
\section{Comparison with Existing Concepts} \label{comparison}
The subsequent section provides a short comparison with the approaches established in the literature.
The majority of the literature on convex vector optimization \cite{lohne2014primal} and convex multi-objective optimization \cite{ehrgott2011approximation} focuses on solution concepts for bounded problems. This assumption, however, is restrictive. Example \ref{example} presents an unbounded and not self-bounded (VCP). Suppose one were to apply a solution concept designed for bounded (VCPs) to a bounded portion of the upper image, that is, to approximate the upper image within a region of interest $B_r(p)$ using tools for bounded approximation. From Figures \ref{fig:step2}, \ref{fig:step3}, and \ref{fig:step4} we can conclude that such an approach yields less global information than a homogeneous $\delta$-approximation. This remains true even if the ordering cone of the (VCP) is added to the bounded approximation. The Figures \ref{fig:step2}, \ref{fig:step3}, \ref{fig:step4} show that vertices of the approximate upper image lying outside the region of interest capture the parabolic shape of the upper image as well as its recession directions. The contrast to the bounded approximation becomes particularly pronounced when the ordering cone is notably smaller than the recession cone of the upper image, i.e. assuming $C \subsetneq \mathbb{R}^m$ in Example \ref{example}.\par
When an unbounded problem is not reduced to a bounded one, non‑self‑boundedness of a (VCP) may become an issue. Among the existing notions, the $(\varepsilon ,\delta)$-concept discussed in detail in \cite{wagner2023algorithms} is, to the best of our knowledge, the only framework applicable to general unbounded problems, including non‑self‑bounded ones. However, the homogeneous $\delta$ -solution concept introduced here is more general than the $(\varepsilon ,\delta )$-concept. As Theorem \ref{homExists} establishes, a homogeneous $\delta$-solution exists for any problem of the type (VCP) and any prescribed precision $\delta >0$. In contrast to that, \cite{wagner2023algorithms} do not provide a general result on the existence of $(\varepsilon ,\delta )$-solutions but present an algorithm for finding such a solution. Although they prove correctness and gave some hints on how finiteness of their algorithm might be shown, their approach relies on additional assumptions. In particular, for the validity of the concept, the ordering cone is required to be solid. For the discussion of their algorithm, \cite{wagner2023algorithms} further assume the feasible region to be closed and full-dimensional, as well as the objective function to be continuous on the feasible set. It thereby remains unclear under which general conditions an $(\varepsilon ,\delta )$-solution for (VCP) exists. Beyond that, \cite{wagner2023algorithms} assume polyhedrality of the ordering cone for their algorithmic implementation.\par
Lastly, it shall be noted that within the $(\varepsilon, \delta)$-approximation framework, the radius in which the $\varepsilon$-approximation quality holds cannot be explicitly specified beforehand. In the implementation proposed in \cite{wagner2023algorithms}, this radius depends on the parameters $\varepsilon$ and $\delta$ as well as the properties of the upper image itself. In contrast to that, Section \ref{DMprocedure} demonstrated how the parameter $\delta$ can be chosen in the context of homogeneous $\delta$-approximations so as to guarantee a desired bound on the approximation error within a specified radius. Hence, the decision-making procedure resulting from the homogeneous $\delta$-approximation concept is of a profoundly different nature.
\section{Proofs of Theorems \ref{homExists}, \ref{absch} and \ref{oneOrTwo}}\label{proofs}
In this section, we provide the mathematical proofs for the results given in Section \ref{main}.
\begin{proof}[Proof of Theorem \ref{homExists}] \label{proofhomExists}
	Without loss of generality, let $\delta \in (0,1)$ be given. Note that for $\delta \geq 1$, every finite nonempty set $X \subseteq S$ is a finite homogeneous $\delta$-solution of (VCP) because the truncated Hausdorff-distance of two sets is always less than or equal to 1.
	Furthermore, $\hom \mathcal{P} \cap \mathbb{B}$ is a nonempty compact convex set. Therefore, according to \cite[Theorem 2.1]{ney1995polyhedral}, there exists a finite set $Y = \{(y^1,\mu^1), \dots, (y^r, \mu^r)\}$ with $r \in \mathbb{N}$ and $ \conv Y \subseteq \hom \mathcal{P} \cap \mathbb{B}$ such that 
	
	\begin{equation}\label{first}
	d_H(\conv Y, \hom \mathcal{P} \cap \mathbb{B}) \leq \frac{\delta}{2}.
	\end{equation}\par
	We can assume without loss of generality that $\dim (\conv Y) = \dim(\hom \mathcal{P} \cap \mathbb{B}) = \dim (\hom \mathcal{P})$ for the following reasoning: The second equality is obvious from definition. Because $\conv Y \subseteq \hom \mathcal{P} \cap \mathbb{B}$, the relation $\dim(\conv Y) \leq \dim(\hom \mathcal{P} \cap \mathbb{B})$ is always true. If $\dim(\conv Y) < \dim(\hom \mathcal{P} \cap \mathbb{B})$, we can add sufficiently many affinely independent points of $\hom \mathcal{P} \cap \mathbb{B}$ to the set $Y$ until we obtain $\dim( \conv Y) = \dim(\hom \mathcal{P} \cap \mathbb{B})$.\par
	We now proceed to show that we can choose a set of points $\bar{Y} \subseteq \relint(\hom \mathcal{P})$ derived from the given set $Y$ such that $d_{H}(\conv \bar{Y}, \conv Y) \leq \frac{\delta}{2}$. Since $\relint(\conv Y) \neq \emptyset$ \cite[Theorem 6.2]{tyrrell1970convex}, we can choose $(\bar{y}, \bar{\mu})^T \in \relint(\conv Y)$.
	Note that $\bar{\mu} > 0$: It is clear that $\bar{\mu} \geq 0$ since $(\bar{y}, \bar{\mu})^T \in \hom \mathcal{P}$. Since $\mathcal{P} \neq \emptyset$, there exists a point $(z,1)^T \in \hom \mathcal{P}$ and thereby 
	$$\homog \mathcal{P} \not\subseteq \left\{(z,0)^T \: \middle| \:   z \in \mathbb{R}^m\right\}.$$
	Since $\dim( \conv Y) = \dim(\hom \mathcal{P})$ was assumed, 
	$$\conv Y \not \subseteq \left\{(z,0)^T \: \middle| \:   z \in \mathbb{R}^m\right\}$$ follows. By the characterization of relative interior points in \cite[Theorem 6.4]{tyrrell1970convex} and the fact that $\conv Y \subseteq \hom \mathcal{P} \subseteq \R^m \times \R_+$, this implies that for a relative interior point, the last component $\bar{\mu}$ has to be strictly positive.\par
	For $i = 1, \dots ,r$, define
	$$d^i := (\bar{y}, \bar{\mu})^T - (y^i, \mu^i)^T.$$
	Then $(y^i, \mu^i)^T+ t d^i \in \relint(\conv Y)$ for all $t \in (0,1]$ \cite[Theorem 6.1]{tyrrell1970convex}. Consider
	$$\bar{t} := \min_{1 \leq i \leq r}\left\{\frac{\delta}{2\left\lVert d^i\right\rVert}, 1 \right\}.$$
	We now can define the set $\bar{Y} = \{ (\bar{y}^1, \bar{\mu}^1)^T, \dots,(\bar{y}^r, \bar{\mu}^r)^T \}$ of relative interior points of $\conv Y$ in the following way:
	$$(\bar{y}^i, \bar{\mu}^i)^T := (y^i, \mu^i)^T + \bar{t} d^i.$$
	We have
	$$\conv \bar{Y} \subseteq \relint(\conv Y),$$
	as well as
	\begin{equation}\label{dHleq}
	d_H(\conv \bar{Y}, \conv Y) \leq \max\left\{d(z, \conv Y) \: \middle| \:   z \in \bar{Y}\right\} \leq \frac{\delta}{2}.
	\end{equation}
	Note that the first inequality in Inequality \eqref{dHleq} follows from the fact that the Hausdorff-distance between two polytopes is attained at an extreme point \cite[Theorem 3.3]{BATSON1986441}. The second inequality follows from the construction of the set $\bar{Y}$ from the set $Y$. 
	
	From $\conv \bar{Y} \subseteq \relint(\conv Y)$, we get the relation
	$$\conv \bar{Y} \subseteq \relint(\conv Y)\subseteq \relint(\hom \mathcal{P} \cap \mathbb{B}) \subseteq \relint(\hom \mathcal{P}).$$
	The second and last inclusion follow using the fact that $\dim(\conv Y) = \dim(\hom \mathcal{P} \cap \mathbb{B}) = \dim (\hom \mathcal{P})$ and \cite[Corollary 6.5.2]{tyrrell1970convex}. 
	
	We have 
	\begin{equation}\label{third}
	d_{tH}(\conv \bar{Y}, \hom \mathcal{P}) = d_H(\conv \bar{Y}, \hom \mathcal{P} \cap \mathbb{B})
	\end{equation}
	due to $\conv \bar{Y} \subseteq \hom \mathcal{P} \cap \mathbb{B} \subseteq \mathbb{B}$.
	From Inequality \eqref{first}, Inequality \eqref{dHleq}, Equation \eqref{third} and the triangle inequality, it then follows that
	\begin{equation} \label{incl1}
		\begin{aligned}
			d_{tH}(\conv \bar{Y}, \hom \mathcal{P}) &= d_H(\conv \bar{Y}, \hom \mathcal{P} \cap \mathbb{B})\\
			&\leq d_H(\conv \bar{Y}, \conv Y) + d_H(\conv Y, \hom \mathcal{P} \cap \mathbb{B})\\
			&\leq \delta.
		\end{aligned}
	\end{equation}
	Since $\hom \mathcal{P}$ is a convex cone and $\conv \bar{Y} \subseteq \cone \bar{Y} \cap \mathbb{B}$, we get $\cone \bar{Y} \subseteq \hom \mathcal{P}$ with
	$$d_{tH}(\cone \bar{Y}, \hom \mathcal{P}) \leq \delta.$$
	Thereby, we have shown that, for any $\delta \in (0,1)$, we can find a set $\bar{Y}$ of relative inner points of $\hom \mathcal{P}$ such that a homogeneous $\delta$-approximation of $\mathcal{P}$ can be identified via $\cone \bar{Y}$.\par
	To retrieve a homogeneous $\delta$-approximation and a homogeneous $\delta$-solution for (VCP), consider the following representation for $\relint(\hom \mathcal{P})$ from \cite[Corollary 6.8.1]{tyrrell1970convex}:
	$$\relint(\hom \mathcal{P}) = \left\{\mu (x,1)^T \: \middle| \:   \mu>0, x \in \relint \mathcal{P}\right\}.$$
	Hence, we have 
	\begin{equation} \label{deconification}
		\cone \bar{Y} = \hom\left(\conv\left\{\frac{\bar{y}^1}{\bar{\mu}^1}, \dots, \frac{\bar{y}^r}{\bar{\mu}^r}\right\}\right),
	\end{equation}
	with 
	$$\{\frac{\bar{y}^1}{\bar{\mu}^1}, \dots, \frac{\bar{y}^r}{\bar{\mu}^r}\} \subseteq \relint \mathcal{P} = \relint (F[S] + C) \subseteq F[S]+C.$$
	Note that $\relint \mathcal{P} = \relint(F[S] + C)$ follows since the relative interior of the closure of the set $F[S]+C$ equals that of the set $F[S]+C$ \cite[Theorem 6.3]{tyrrell1970convex}.
	It is thereby implied that for all $i = 1, \dots, r$:
	$$\frac{\bar{y}^i}{\bar{\mu}^i} = F(x^i) + c^i$$
	for some $x^i \in S$ and some $c^i \in C$. Let $X = \left\{x^1, \dots, x^r \right\}$. Then we have 
	\begin{equation} \label{incl2}
		\conv \left\{\frac{\bar{y}^1}{\bar{\mu}^1}, \dots, \frac{\bar{y}^r}{\bar{\mu}^r}\right\} \subseteq \conv F[X] + C \subseteq \mathcal{P}.
	\end{equation}
	Given Inequality \eqref{incl1}, the relation in Equation \eqref{deconification} and Inclusion \eqref{incl2}, we see that $X$ represents a finite homogeneous $\delta$-solution of the problem (VCP).
\end{proof}

To prove Theorem \ref{absch}, we first prove a supporting statement in Lemma \ref{lemma}.

\begin{lemma} \label{lemma} Let $A \subseteq \mathbb{R}^m$ and $B \subseteq \mathbb{R}^m$ be two nonempty closed convex sets with $A \subseteq B$ and $d_{tH}(\hom A, \hom B) \leq \delta$ for some $\delta \in (0,1)$. Let $y \in \bd A$ with $\left\lVert y \right\rVert < R_\delta :=\sqrt{\frac{1}{\delta ^2}-1}$. Then
	$$\exists n \in \bd B: d\left(\frac{(n,1)^T}{\left \lVert(n,1)^T\right \rVert}, \cone \left\{(y,1)^T\right\} \right) \leq \delta.$$
\end{lemma}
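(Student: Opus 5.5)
Since $y \in \bd A$, I start from a separating hyperplane. Choose $w \in \R^m \setminus\{0\}$ with $w^T a \geq w^T y$ for all $a \in A$, and normalize so that $\norm{w}=1$. Set $u := (y,1)^T/\norm{(y,1)^T}$; this is a unit vector in $\hom A$, so $u \in \hom A \cap \mathbb{B}$. Because $A \subseteq B$ and $d_{tH}(\hom A, \hom B) \leq \delta$, the excess of $\hom A \cap \mathbb{B}$ over $\hom B \cap \mathbb{B}$ is at most $\delta$. In fact only the reverse excess will matter, since $A\subseteq B$ makes the first excess zero.

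The idea is to perturb $u$ slightly outside $\hom B$ and use the reverse excess to push a point of $\hom B$ towards the ray $\cone\{(y,1)^T\}$. Concretely, I work in the two-dimensional plane $L$ spanned by $u$ and $v := (-w,\, w^T y)^T$. Points $(z,t)^T$ with $t>0$ and $w^T z < t\, w^T y$ correspond to $z/t$ lying strictly on the "bad" side of the hyperplane, hence outside $A$.

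For $B$, however, I need to locate a boundary point $n$. I plan to consider the rays of $\hom$-points $(y - s w, 1)^T$ for $s \geq 0$.

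\textbf{Case 1:} some $y - s w \notin B$. Since $y \in A \subseteq B$ and $B$ is closed and convex, the segment from $y$ to such a point leaves $B$ at some $n = y - s_0 w \in \bd B$ with $s_0 \geq 0$. The angular distance from $(n,1)^T$ to the ray through $(y,1)^T$ must then be controlled. For that I use the excess $e[\hom B\cap\mathbb{B},\hom A\cap \mathbb{B}]\le\delta$ applied to $(n,1)^T/\norm{(n,1)^T}$. Its distance to $\hom A$ is at least its distance to the halfspace cone $H := \{(z,t): w^Tz \geq t\, w^T y,\ t\geq 0\} \supseteq \hom A$. This distance is computable in closed form, and it is increasing in $s_0$. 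It yields an upper bound on $s_0$ in terms of $\delta$, and from that a bound on $d((n,1)^T/\norm{(n,1)^T}, \cone\{(y,1)^T\})$. Actually, the cleaner route is to pick $n$ as the boundary point along $y - sw$. Then I directly compare $d(\hat n, \cone\{(y,1)^T\})$ (with $\hat n$ the normalized point) with $d(\hat n, H)$. The ray direction $u$ lies on the boundary hyperplane of $H$, and the vector $\hat n$ lies in $L$. A planar angle argument shows that the distance to the ray equals the distance to $\bd H$ whenever the angle is below $90^\circ$, giving the bound $\leq\delta$.

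\textbf{Case 2:} $y - s w \in B$ for all $s\ge 0$. Then $(-w,0)^T \in \hom B$, since it is a recession direction. Its distance to $H$ is $\norm{w}/\norm{(w, w^Ty)^T}$-type, and it should be at least $1/\sqrt{1+\norm{y}^2}$ after using the direction $(-w,0)$ against $H$. Here the hypothesis $\norm{y} < R_\delta$ is exactly $1/\sqrt{1+\norm{y}^2} > \delta$, contradicting the excess bound. So Case 2 cannot occur, and this is precisely where the region of validity enters.

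The main obstacle is the geometry in Case 1. I must show that the excess bound (distance to $H$, hence to $\hom A$) transfers to the distance to the \emph{ray} $\cone\{(y,1)^T\}$ rather than just to the hyperplane. I would handle this by restricting everything to the plane $L$ and computing angles explicitly. If the direction $-w$ does not give the needed planar picture, the fallback is to replace $w$ by the component orthogonal to $(y,1)$ in the lifted space, i.e. to use the lifted normal $(w,-w^Ty)^T$. I would also choose $n$ minimizing the angle to $u$ among points of $\bd B$ in that plane, and use convexity to ensure the minimizer lies within angle $\arcsin\delta$.
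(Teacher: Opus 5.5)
Your Case~2 is correct: the unit vector $(-w,0)^T\in\hom B$ has distance at least $1/\norm{\eta}\geq 1/\norm{(y,1)^T}>\delta$ from $\hom A$, where $\eta:=(w,-w^Ty)^T$. Case~1, however, has a genuine gap. The planar identity between the distance to the ray and the distance to $\bd H$ holds only for vectors in $L=\operatorname{span}\{u,-\eta\}$, and your exit point is not in $L$. Indeed $(n,1)^T=(y,1)^T-s_0(w,0)^T$ lies in $\operatorname{span}\{(y,1)^T,(w,0)^T\}$, which equals $L$ only if $w^Ty=0$ or $y\parallel w$. Let $P$ be the orthogonal projection onto $\{(y,1)^T\}^{\perp}$. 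Then the distance of $\hat n$ to the line through $(y,1)^T$ is exactly $\norm{\eta}\,\norm{P(w,0)^T}$ times its distance to the supporting halfspace $\{x:\eta^Tx\geq 0\}$. This factor is $\geq 1$, with equality only in those two special cases. So the excess bound at $\hat n$ gives at best $d(\hat n,\cone\{(y,1)^T\})\leq c\,\delta$ with $c\geq 1$.

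No sharper estimate can rescue this particular $n$, because it can actually violate the bound. Take $m=2$, $A=\{z: z_1\geq 2\}$, $B=\{z: z_1\geq 3/2\}$, $y=(2,1)^T$, $w=(1,0)^T$. Then $d_{tH}(\hom A,\hom B)=1/\sqrt{65}=:\delta$ and $\norm{y}=\sqrt5<8=R_\delta$. Your exit point is $n=(3/2,1)^T$, yet $d(\hat n,\cone\{(y,1)^T\})=1/\sqrt{51}>\delta$. Also, the first reading of your fallback, the component $P(w,0)^T$ of $(w,0)^T$ orthogonal to $(y,1)^T$, spans the same failing plane. Only the lifted normal $\eta$ works.

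What is missing is to run the search inside $L$ rather than along $y-sw$. Set $\hat\eta:=\eta/\norm{\eta}$ and $u_\theta:=\cos\theta\,u-\sin\theta\,\hat\eta$ for $\theta\in[0,\pi/2]$. Since $\eta\perp(y,1)^T$ and $\eta^Tx\geq 0$ on $\hom A$, you get $d(u_\theta,\hom A\cap\mathbb{B})\geq -\eta^Tu_\theta/\norm{\eta}=\sin\theta=d(u_\theta,\cone\{u\})$. Hence the excess bound forces $u_\theta\notin\hom B$ whenever $\sin\theta>\delta$.

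The set of $\theta$ with $u_\theta\in\hom B$ is a closed interval containing $0$, because $\hom B$ is a closed convex cone. Its maximum $\theta_0$ satisfies $\sin\theta_0\leq\delta<1$, so $u_{\theta_0}\in\bd(\hom B)$ and $d(u_{\theta_0},\cone\{u\})\leq\delta$. The role of your Case~2 is then taken by one fact: every unit vector with nonpositive last coordinate has distance at least $1/\norm{(y,1)^T}>\delta$ from $\cone\{u\}$. Thus $u_{\theta_0}$ has positive last coordinate and dehomogenizes to a point $n\in\bd B$ with the required property. In $\R^m$ this amounts to searching along $-(I+yy^T)w$ instead of $-w$.

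Repaired this way, your argument is a valid alternative to the paper's. The paper obtains a point of $\bd(\hom B)\cap\mathbb{B}$ within $\delta$ of $u$ from Wills' theorem on Hausdorff distances of boundaries. It converts this into the ray condition via the symmetry $d(\bar n/\norm{\bar n},\cone\{(y,1)^T\})=d(u,\cone\{\bar n\})$, and excludes last coordinate $0$ exactly as in your Case~2. Your version replaces the appeal to Wills' theorem by an explicit construction from a supporting hyperplane of $\hom A$.
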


\begin{proof} 
	Consider $y \in \bd A$ with $\left\lVert y \right\rVert < R_\delta$. We first show that for arbitrary $\bar{n} \in \mathbb{R}^{m+1}$ we have 
	\begin{equation} \label{geomarg}
		d\left(\frac{\bar{n}}{\left \lVert\bar{n}\right \rVert}, \cone \left\{(y,1)^T\right\} \right) = d\left(\frac{(y,1)^T}{\left \lVert(y,1)^T\right \rVert}, \cone \left\{\bar{n}\right\} \right). 
	\end{equation}
	Let
	\[
	u:=\frac{\bar n}{\|\bar n\|},\qquad
	v:=\frac{(y,1)^T}{\|(y,1)^T\|},
	\]
	so that $\|u\|=\|v\|=1$. Then
	\begin{equation} \label{distPtCone}
		d(u,\cone\{v\})
		=
		\min_{\lambda\geq 0}\|u-\lambda v\|.
	\end{equation}
	Since
	\[
	\|u-\lambda v\|^2
	= (u-\lambda v)^T (u-\lambda v)
	=
	1+\lambda^2-2\lambda u^Tv,
	\]
	the minimizer in Equation \eqref{distPtCone} is given by
	$\lambda=\max\{ u^Tv,0\}$.
	Consequently,
	\[
	d(u,\cone\{v\})^2
	=
	\begin{cases}
		1- (u^Tv)^2,&\text{if } u^Tv \geq 0,\\
		1,&\text{otherwise}.
	\end{cases}
	\]
	Since the right-hand side is symmetric in $u$ and $v$, it follows that
	\[
	d(u,\cone\{v\})
	=
	d(v,\cone\{u\}),
	\]
	which proves Equation \eqref{geomarg}.\par
	From the definition of the homogenization of a closed convex set it follows that $y \in \bd A$ implies $(y,1)^T \in \bd(\hom A)$.
	By assumption, we have that
	$$d_{tH}(\hom A, \hom B) = d_H\left(\hom A \cap \mathbb{B}, \hom B \cap \mathbb{B}\right) \leq \delta.$$
	According to \cite[Theorem 20]{wills2007hausdorff}, since $\hom A \cap \mathbb{B}$ and $\hom B \cap \mathbb{B}$ are compact, convex and nonempty, we get
	\begin{equation} \label{wills}
		d_H\left(\hom A \cap \mathbb{B}, \hom B \cap \mathbb{B}\right) = d_H\left(\bd\left(\hom A \right) \cap \mathbb{B}, \bd\left(\hom B\right) \cap \mathbb{B}\right).
	\end{equation}
	And hence for given $y \in \bd A$ with $\frac{(y,1)^T}{\left \lVert(y,1)^T\right \rVert} \in \bd \left( \hom A\right) \cap \mathbb{B}$
	\begin{align} \label{wills2}
		d_H\left(\hom A \cap \mathbb{B}, \hom B \cap \mathbb{B}\right) &\geq \sup_{z \in \bd\left(\hom A\right) \cap \mathbb{B}} d\left(z, \bd\left(\hom B \right) \cap \mathbb{B}\right)\\
		&\geq d\left(\frac{(y,1)^T}{\left \lVert(y,1)^T\right \rVert}, \bd\left(\hom B \right) \cap \mathbb{B}\right).
	\end{align}
	Thus, from Equation \eqref{wills} and Inequality \eqref{wills2}, we get
	$$d\left(\frac{(y,1)^T}{\left \lVert(y,1)^T\right \rVert}, \bd\left(\hom B \right) \cap \mathbb{B}\right) \leq \delta.$$
	With Equation \eqref{geomarg}, this implies the existence of $\bar{n} \in \bd \left( \hom B \right)$ such that
	
	\begin{equation} \label{eqLast}
		d\left(\frac{\bar{n}}{\left \lVert\bar{n}\right \rVert}, \cone \left \{(y,1)^T \right \} \right) \leq \delta.
	\end{equation} \par
	It remains to show that $\bar{n}$ can be chosen of the form $\bar{n} = (\tilde{n},1)^T$. Since $\bar{n}$ is contained in the homogenization of a nonempty convex set, it is of the form $\bar{n} = (\tilde{n},\mu)^T$ with $\mu \geq 0$. Clearly, if $\mu > 0$, $$\frac{\bar{n}}{\left \lVert\bar{n}\right \rVert} = \frac{(n,1)^T}{\left \lVert(n,1)^T\right \rVert} \quad \text{for } n = \frac{\tilde{n}}{\mu}$$ 
	and the desired statement holds for $(n,1)^T$.
	
	Assume $\mu = 0$ was to hold. Consider the orthogonal projection $\pi_H$ onto $H$ with:
	\[
	H := \{z\in\mathbb{R}^{m+1} : z_{m+1}=0\},
	\qquad 
	\pi_H : \mathbb{R}^{m+1}\to H, \qquad \pi_H((z,k)^T) = (z,0) \;\; \text{for } z \in \R^m
	\]
	Then $\pi_H (y,1)^T = (y,0)^T$ and we have 
	$$d\left((y,1)^T, H\right) = d\left((y,1)^T, (y,0)^T\right).$$
	Since both cones $\cone\left\{(\tilde n,0)^T\right\}$ and $\cone\left\{(y,0)^T\right\}$ lie in $H$,
	
	$$d\left((y,1)^T, \cone\left\{(\tilde{n},0)^T\right\} \right) \geq d\left((y,1)^T, \cone\left\{(y,0)^T\right\} \right).$$
	Using Equation \eqref{geomarg} again, it follows that 
	$$d\left(\frac{(\tilde{n},0)^T}{\left \lVert(\tilde{n},0)^T\right \rVert}, \cone\left\{(y,1)^T\right\}\right) \geq d\left(\frac{(y,0)^T}{\left \lVert(y,0)^T\right \rVert}, \cone\left\{(y,1)^T\right\}\right).$$
	This implies with Inequality \eqref{eqLast} that 
	
	\begin{equation} \label{leqDel}
	d\left(\frac{(y,0)^T}{\left \lVert(y,0)^T\right \rVert}, \cone\left\{(y,1)^T\right\} \right) \leq \delta.
	\end{equation}
	
	Consider the two-dimensional plane spanned by $(y,0)^T$ and $(y,1)^T$. Define the vectors 
	$$ u:= \frac{(y,0)^T}{\|(y,0)^T\|}, \qquad v:= \frac{(y,1)^T}{\|(y,1)^T\|}.$$ 
	Then $u$ and $v$ are unit vectors and the Euclidean distance between $u$ and the ray $\cone \{v\}$ equals $\sin\theta$, where $\theta$ is the angle between $\cone \{u\}$
	and $\cone \{v\}$. By the law of sines in the triangle with vertices $0$, $u$ and $v$ we obtain
	\[
	\sin\theta = \frac{1}{\|(y,1)^T\|}.
	\]
	Hence
	
	\[
	d\left(\frac{(y,0)^T}{\|(y,0)^T\|}, \cone\left\{(y,1)^T\right\}\right)
	= \frac{1}{\|(y,1)^T\|},
	\]
	which implies $\left \lVert(y,1)^T \right \rVert \geq \frac{1}{\delta}$ with Inequality \eqref{leqDel}. This leads to a contradiction to the initial assumption $\left \lVert y \right \rVert < R_\delta$.
\end{proof} \par

\begin{proof}[Proof of Theorem \ref{absch}] In the following, we exclude the case $y \in \bd \mathcal{P}$. As this would imply $d(y,\bd\mathcal{P})=0$ and for the upper bound we have $\alpha(r,\delta) \geq 0$ under the given assumption $r<R_\delta$, the statement of Theorem \ref{absch} is always true in this case.\par
Due to the assumption $r<R_\delta$  and $P_X$ being an inner homogeneous $\delta$-approximation of $\mathcal{P}$, Lemma \ref{lemma} states that there exists $(\tilde n,1)^T \in \bd(\hom \mathcal{P})$ such that 
$$d\left(\frac{(\tilde n,1)^T}{\left \lVert(\tilde n,1)^T\right \rVert} , \cone\left\{(y,1)^T\right\}\right) \leq \delta.$$
	Let 
	\begin{equation}\label{qRestricts}
		q := \min \left\{d\left(\frac{(s,1)^T}{\left\lVert(s,1)^T\right\rVert},\cone\left\{(y,1)^T\right\}\right)
		\;\middle|\; (s,1)^T \in \bd(\hom\mathcal{P}) \right\}.
	\end{equation}
	Then
	$$q \leq d\left(\frac{(\tilde n,1)^T}{\left\lVert(\tilde n,1)^T\right\rVert},\cone\left\{(y,1)^T\right\}\right) \leq \delta$$
	and $0 < q < 1$. Note that since $\bd(\hom \mathcal{P}) \cap \mathbb{B}$ is a compact set, the minimum in Equation \eqref{qRestricts} is attained according to the extreme value theorem \cite[Theorem 4.16]{Rudin2008}. Thus, there exists $(n,1)^T \in \bd(\hom \mathcal{P})$ such that 
	
	\begin{equation} \label{isQ}
		d\left(\frac{(n,1)^T}{\left \lVert(n,1)^T\right \rVert}, \cone\left\{(y,1)^T\right\}\right) = q.
	\end{equation}

From this point onward, we restrict our analysis to the two-dimensional subspace
\[
U:=\operatorname{span}\{(y,1)^T,(n,1)^T\}\subseteq\mathbb R^{m+1}.
\]
Since all vectors considered in the following belong to $U$, all distances, angles, and norms are computed with respect to the Euclidean structure on $U$. The affine subspace
\[
U\cap\left\{z \in \R^{m+1} \mid \; z_{m+1}=1\right\}
\]
is one-dimensional. We choose coordinates on this affine line such that the point corresponding to $(y,1)^T$ has first coordinate $\|y\|$ and the point corresponding to $(n,1)^T$ has first coordinate $\|n\|$. For simplicity of notation, we identify $y$ and $n$ with their norm and write
\[
(y,1)^T=(\|y\|,1)^T,\qquad
(n,1)^T=(\|n\|,1)^T.
\]

We define:
	\begin{equation*}
		z_1 := \frac{\sqrt{1-q^2}}{\left \lVert(y,1)^T\right \rVert} (y,1)^T, \quad
		z_2 := \frac{(n,1)^T}{\left \lVert(n,1)^T\right \rVert}.
	\end{equation*}
Then it holds that 
	\begin{equation}\label{defAlphayq}
		d\left((y,1)^T, (n,1)^T\right) = \left \lVert n-y \right \rVert.
	\end{equation}
Equation \eqref{defAlphayq} follows from the fact that both points, $(y,1)^T$ and $(n,1)^T$, lie on the hyperplane $\left\{x \in \mathbb{R}^{2} \: \middle| \:   x_{2} = 1\right\}$. By Equation \eqref{isQ}, we have 

\begin{equation} \label{distanceBdRestricted}
	d(y, \bd \mathcal{P}) \leq \left \lVert n-y \right \rVert
\end{equation}

Thus, $\left \lVert n-y \right \rVert$ is the distance that is aimed to be bounded from above in the following. We now turn to some geometric considerations:	Note that, by definition, $z_1$ belongs to $\cone\left\{(y,1)^T\right\}$ and $z_2$ to $\cone\left\{(n,1)^T\right\}$ with
	\begin{equation*}
		\left \lVert z_1\right \rVert = \sqrt{1-q^2}, \; \left \lVert z_2 \right \rVert = 1
	\end{equation*}
Let $p$ denote the orthogonal projection of $z_2$ onto the ray  $\cone\left\{(y,1)^T\right\}$. Then the triangle with
vertices $0$, $p$, and $z_2$ has hypotenuse of length $\|z_2\| = 1$ and one leg of length $\|p\|$. The other leg has length
\[
\|z_2-p\| = q
\]
by the definition of $q$ in Equation \eqref{isQ}. Hence, from the Pythagorean theorem, we can conclude $p = z_1$ and
\begin{equation}\label{isQtwo}
d(z_1,z_2) = \|z_2 - z_1\| = q.
\end{equation} \par
Furthermore, we have
\begin{equation} \label{z2Wz1}
	z_2 = z_1 \pm \frac{q}{\left\lVert (y ,1)^T\right\rVert} (1, -y)^T,
\end{equation}
since the direction $(1,-y)^T$ is perpendicular to $\cone\{(y,1)^T\}$ and by Equation \eqref{isQtwo}. Among the two opposite perpendicular directions $\pm(1,-y)^T$, we consider both cases. We proceed with choosing $+(1,-y)^T$ for the following derivation. With this choice, the derived term for $\alpha_{y,q}$ in Equation \eqref{gleichheit} is maximal under the assumption $r < R_\delta$ and thus, the error bound in Inequality \eqref{abschProof} is valid for the negative sign as well. The following derivation is analogous for the second case.

As $\norm{z_2} = 1$ and $\norm{(n,1)^T} \geq 1$, there exists $k > 0$ such that
	\begin{equation*} \label{assumptionnn}
		(n,1)^T = k z_2
	\end{equation*}
and thus with Equation \eqref{z2Wz1}

\begin{equation}
	(n,1)^T = k \left(\frac{\sqrt{1-q^2}}{\left \lVert(y,1)^T\right\rVert} (y,1)^T+ \frac{q}{\left \lVert(y,1)^T\right \rVert} (1, -y)^T\right).
\end{equation}
Considering the first and second coordinate separately, this implies that
	\begin{equation} \label{equation1}
		1 = k\left(\frac{\sqrt{1-q^2}}{\left \lVert(y,1)^T\right \rVert}- \frac{qy}{\left \lVert(y,1)^T\right \rVert}\right) \Longrightarrow k = \frac{\left \lVert(y,1)^T\right \rVert}{\sqrt{1-q^2}-qy},
	\end{equation}
as well as
	\begin{equation} \label{equation2}
		n = k \left(\frac{\sqrt{1-q^2}}{\left \lVert(y,1)^T\right \rVert} y + \frac{q}{\left \lVert(y,1)^T\right \rVert}\right).
	\end{equation}
Note that from Equation \eqref{equation1}, it follows that $k$ is strictly positive if and only if $y < \sqrt{\frac{1}{q^2}-1}$ which is ensured given $r<R_\delta$ and the fact that $0<q<\delta$: 
$$y \leq r < \sqrt{\frac{1}{\delta^2}-1} \leq \sqrt{\frac{1}{q^2}-1}.$$
Combining Equation \eqref{equation1} and Equation \eqref{equation2} yields
	\begin{equation} \label{equation3}
		n = \frac{\sqrt{1-q^2}y + q}{\sqrt{1-q^2}-qy}.
	\end{equation}
Therefore, we obtain based on Equation \eqref{defAlphayq} and Equation \eqref{equation3} and with some simplification:
	\begin{equation}\label{alphaPlus}
	\left \lVert n - y \right \rVert = \frac{q(y^2+1)}{\sqrt{1-q^2}-qy }.
	\end{equation}\par
Analogously, we derive for negative sign in Equation \eqref{z2Wz1} that
	\begin{equation}\label{alphaMinus}
	\left \lVert n - y \right \rVert = \frac{q(y^2+1)}{\sqrt{1-q^2}+qy }.
	\end{equation}
Thus, from Equations \eqref{alphaPlus} and \eqref{alphaMinus}, we get
	\begin{equation}\label{gleichheit}
		\left \lVert n-y \right \rVert \leq \alpha_{y,q} := \max\left\{\frac{q(y^2+1)}{\sqrt{1-q^2}-qy }, \frac{q(y^2+1)}{\sqrt{1-q^2}+qy} \right\}.
	\end{equation}\par
Under the assumption $y < \sqrt{\frac{1}{q^2}-1}$, which is ensured given $r < R_\delta$, we have
	\begin{equation*}
	\alpha_{y,q} = \frac{q(y^2+1)}{\sqrt{1-q^2}-qy }.
	\end{equation*}\par
Furthermore, the first derivative of $\alpha_{y,q}$ in $q$ is 
$$\frac{\partial \alpha_{y,q}}{\partial q} = \frac{y^2+1}{\sqrt{1-q^2}\left(\sqrt{1-q^2}-yq\right)^2},$$ 
which is strictly positive for $q \in (0,1)$. This is given due to the assumptions $\delta \in (0,1)$ and $y \notin \bd \mathcal{P}$. Hence, it is a monotonically increasing function for all $q \in (0,1)$.\par
Next, consider the first derivative in $y$:
$$\frac{\partial \alpha_{y,q}}{\partial y} = \frac{q\left(q+2\sqrt{1-q^2}y-qy^2\right)}{\left(qy-\sqrt{1-q^2}\right)^2}.$$
It can be shown that under the assumption $q < \frac{1}{\sqrt{y^2+1}}$, the derivative is strictly positive and hence the function is monotonically increasing in $y$.\par
Therefore, given $r < R_\delta$, and the fact that $q \leq \delta$, it follows that 

\begin{equation} \label{abschProof}
	\alpha_{y,q} = \frac{q(y^2+1)}{\sqrt{1-q^2}-qy } \leq \frac{\delta(r^2+1)}{\sqrt{1-\delta^2}-\delta r } = \alpha(r,\delta).
\end{equation}
	
\end{proof}

\begin{proof}[Proof of Theorem \ref{oneOrTwo}]
	First, it shall be noted that due to the characterization of the homogenization of a nonempty closed convex set in Proposition \ref{rock} and the definition of a homogeneous $\delta$-approximation, one of the following always holds for $y \in \bd P_X$:
	\begin{enumerate}[label=(\roman*)]
		\setcounter{enumi}{2}
		\item \label{itm: iii} $\exists (n,1)^T \in \bd(\hom \mathcal{P}): d\left(\frac{(n,1)^T}{\left \lVert(n,1)^T\right \rVert}, \cone\left\{(y,1)^T\right\}\right) \leq \delta.$
		\item \label{itm: iv} $\exists (d,0)^T \in \bd(\hom \mathcal{P}): d\left(\frac{(d,0)^T}{\left \lVert(d,0)^T\right \rVert}, \cone\left\{(y,1)^T\right\}\right) \leq \delta.$
	\end{enumerate}\par
	Let \ref{itm:i} not hold, i.e., for all $n \in \bd \mathcal{P}$, $d(y, \bd \mathcal{P}) > \alpha_{\left\lVert y \right\rVert, q}$ with $\alpha_{\left\lVert y \right\rVert, q}$ as defined in Equation \eqref{gleichheit} and
	$$q = d\left(\frac{\left(n,1\right)^T}{\left \lVert (n,1) \right \rVert},\cone\left\{(y,1)^T\right\}\right).$$
	In the proof of Theorem \ref{absch}, we showed that the existence of a point $n \in \bd \mathcal{P}$ with $$d\left(\frac{\left(n,1\right)^T}{\left \lVert (n,1) \right \rVert},\cone\{(y,1)^T\}\right) \leq \delta$$
	implies $d(y, \bd \mathcal{P}) \leq \alpha_{\norm{y},q}$ (see Equations \eqref{qRestricts}, \eqref{distanceBdRestricted} and \eqref{gleichheit}). Therefore, by contraposition, if \ref{itm:i} does not hold, then \ref{itm: iii} cannot hold. Proposition \ref{rock} implies that \ref{itm: iv} has to hold. \par
	We show that \ref{itm: iv} implies \ref{itm: ii}: Let \ref{itm: iv} hold. By Proposition \ref{rock}, this implies that there exists $d \in 0^+\mathcal{P}$ such that
	
	\begin{equation*}
		d\left(\frac{(d,0)^T}{\left\lVert (d,0)^T \right\rVert}, \cone\left\{(y, 1)^T\right\}\right) \leq \delta.
	\end{equation*}
	With Equation \eqref{geomarg}, we have
	
	\begin{equation} \label{eqLeqQ}
		d\left(\frac{(y,1)^T}{\left\lVert (y,1)^T \right\rVert}, \cone\left\{(d, 0)^T\right\}\right) \leq \delta.
	\end{equation}
	Define
	
	\begin{equation} \label{defQ}
	q := d\left(\frac{(y,1)^T}{\left\lVert (y,1)^T \right\rVert}, \cone\left\{(d, 0)^T\right\}\right).
	\end{equation}
	We can assume without loss of generality that $\norm{d} = 1$ since the assumption $\delta < 1$ together with Inequality \eqref{eqLeqQ} imply $d \neq 0$. The closest point to $\frac{(y,1)^T}{\left\lVert (y,1)^T \right\rVert}$ on the ray $\cone\left\{(d,0)^T\right\}$ is its orthogonal projection $k (d,0)^T$ for some $k > 0$, hence

	\[
	q = \left\lVert \frac{(y,1)^T}{\left\lVert (y,1)^T \right\rVert} - k (d,0)^T\right\rVert.
	\]
	Since $\frac{(y,1)^T}{\left\lVert (y,1)^T \right\rVert}$ is a unit vector, the Pythagorean identity gives

	\[
	1 = k^2 + q^2,
	\]
	and therefore

	\[
	k = \sqrt{1 - q^2}.
	\]
	Thus,
	\begin{equation*}
	q =
	d\left(
	\frac{(y,1)^T}{\lVert (y,1)^T\rVert},
	\sqrt{1-q^2}\,(d,0)^T
	\right)
	\end{equation*}
	
	Given that $q \in (0,1)$, this means
	$$ q = \sqrt{\left\lVert \frac{y}{\left\lVert (y,1)^T \right\rVert} - \sqrt{1-q^2} d \right\rVert^2 +  \frac{1}{\left\lVert (y,1)^T \right\rVert ^2}} \geq d\left(\frac{y}{\left\lVert (y,1)^T \right\rVert}, \sqrt{1-q^2} d\right)$$
	$$\geq d\left(\frac{y}{\left\lVert (y,1)^T \right\rVert}, \cone\{d\}\right) =: a.$$
	Since $\cone\{d\}$ is a cone, it is invariant under positive scaling. Hence, for any $\lambda \geq 0$ we have

	\begin{equation}\label{coneEucl}
	d(\lambda y,\cone\{d\}) = \lambda\, d(y,\cone\{d\}),
	\end{equation}
	because

	\begin{equation*}
	d(\lambda y,\cone\{d\})
	= \inf_{z\in\cone\{d\}} \|\lambda y - z\|
	= \inf_{w\in\cone\{d\}} \|\lambda(y - w)\|
	= \lambda\, d(y,\cone\{d\}).
	\end{equation*}
	
	Using Equation \eqref{coneEucl} as well as $a \leq q \leq \delta$, we get the following inequality:

	\begin{equation}\label{eqB}
		b:= d\left(\frac{y}{\left\lVert y \right\rVert}, \cone\{d\}\right) = \frac{\left\lVert (y,1)^T \right\rVert}{\left\lVert y \right\rVert} a \leq \frac{\left\lVert (y,1)^T \right\rVert}{\left\lVert y \right\rVert}q \leq \frac{\left\lVert (y,1)^T \right\rVert}{\left\lVert y \right\rVert} \delta.
	\end{equation}
	
	Also, since $d \in 0^+ \mathcal{P}$, we get
	
	\begin{equation}\label{eqBgeq}
	b \geq d\left(\frac{y}{\left\lVert y \right\rVert}, 0^+ \mathcal{P}\right).
	\end{equation}
	Hence, from Inequalities \eqref{eqB} and \eqref{eqBgeq}, we derive 
	$$d\left(\frac{y}{\left\lVert y \right\rVert}, 0^+ \mathcal{P}\right) \leq \frac{\left\lVert (y, 1)^T \right\rVert}{\left\lVert y \right\rVert} \delta$$
	and thereby statement \ref{itm: ii} is implied.\par
	Let now statement \ref{itm: ii} not hold. This means that $$d\left(\frac{y}{\left\lVert y \right\rVert}, 0^+ \mathcal{P}\right) > \frac{\left\lVert (y, 1)^T \right\rVert}{\left\lVert y \right\rVert} \delta.$$
	By contraposition of the previous argumentation, this implies that \ref{itm: iv} cannot hold.\par
	We now show that \ref{itm: iv} not holding implies \ref{itm:i}. 
	Using Equation \eqref{geomarg}, statement \ref{itm: iv} not holding means that 
		\begin{equation} \label{forAllbd}
			\forall (d,0)^T \in \bd(\hom \mathcal{P}): \; d\left(\frac{(y,1)^T}{\left\lVert(y,1)^T\right\rVert},\cone\left\{(d,0)^T\right\}\right) > \delta.
		\end{equation}
	Since all interior points of the homogenization of a nonempty closed convex set have strictly positive last coordinate \cite[Theorem 6.4]{tyrrell1970convex}, all points in $\hom \mathcal{P}$ of the form $(d,0)^T$ are in the boundary of $\mathcal{P}$ and the statement in Equation \eqref{forAllbd} holds for all $(d,0)^T$ in $\hom \mathcal{P}$. Thus, we have with the characterization given in Proposition \ref{rock}: 
	\begin{equation}\label{notZero}
	\forall d \in 0^+\mathcal{P}: \; d\left(\frac{(y,1)^T}{\left\lVert(y,1)^T\right\rVert},\cone\left\{(d,0)^T\right\}\right) > \delta
	\end{equation}
	
	Since $d_{tH}(\hom P_X,\hom\mathcal{P}) \leq \delta$, we have

	\[
	d_H\left(\hom P_X \cap \mathbb{B},\, \hom\mathcal{P} \cap \mathbb{B}\right) \leq \delta.
	\]
	By \cite[Theorem 20]{wills2007hausdorff}, the Hausdorff distance between these nonempty compact convex sets equals the Hausdorff distance between their boundaries, i.e.

	\[
	d_H\left(\bd(\hom P_X)\cap\mathbb{B},\, \bd(\hom\mathcal{P})\cap\mathbb{B}\right)
	\leq \delta.
	\]
	Therefore, the normalized point
	\[
	\frac{(y,1)^T}{\|(y,1)^T\|}
	\in \bd(\hom P_X)\cap\mathbb{B}
	\]
	must be within Euclidean distance at most $\delta$ of some boundary point of $\hom\mathcal{P}$ lying in $\mathbb{B}$. By Proposition \ref{rock}, such a boundary point is necessarily within a set of the form $\cone\left\{(n,1)^T\right\}$ or $\cone \left\{(d,0)^T\right\}$.
	
	Thus, given Equation \eqref{notZero}, $(y,1)^T$ must lie within Euclidean distance at most $\delta$ of some ray $\cone\left\{(n,1)^T\right\} \subseteq \bd(\hom \mathcal{P})$, i.e., with Equation \eqref{geomarg}, \ref{itm: iii} holds.
	That implies with the argumentation in the proof of Theorem \ref{absch}, in specific Equations \eqref{isQ}, \eqref{distanceBdRestricted} and \eqref{gleichheit}, that
	$$d(y, \bd \mathcal{P}) \leq \alpha_{\left\lVert y \right\rVert, q}$$
	for some $n \in \bd \mathcal{P}$ with 
	$$q = d\left(\frac{(n,1)^T}{\left\lVert(n,1)^T\right\rVert},\cone\left\{(y,1)^T\right\}\right) \leq \delta.$$
	Note that the proof of Theorem \ref{absch} was independent of the assumption $\left \lVert y \right \rVert < R_\delta$ up to Equation \eqref{gleichheit}. Hence, \ref{itm:i} holds.
\end{proof}
\section{Conclusions} \label{outlook}
In this work, we introduced the concept of homogeneous $\delta$-solutions for convex vector optimization problems. This notion relies on a single precision parameter $\delta$, avoids distinguishing between points far from the origin and directions, and is robust under scaling of the objective functions. We established the existence of a homogeneous $\delta$-solution for any problem of type (VCP) and any $\delta>0$, without additional structural assumptions. Moreover, a form of approximate minimality is guaranteed: boundary points of the approximation within a user-defined RoI of radius $r$ are ensured to lie within a proximity of at least $\alpha(r,\delta)$ to the boundary of the upper image and thus to a weakly $C-$minimal element given feasibility. We provided an explicit expression for $\alpha(r,\delta)$ as a function of $r$ and $\delta$. Outside the RoI, boundary points of the approximation exhibit proximity to boundary points or to recession directions of the upper image.\par
We further demonstrated a decision-making procedure that highlights the practical relevance of the concept. A decision-maker selects the precision parameter $\delta$  by balancing the trade-off between the size of the RoI and the guaranteed approximation quality, keeping $\delta$  as large as possible to enhance computational efficiency. The procedure can be iteratively refined, allowing stepwise exploration of the upper image with adaptable RoI and precision.\par
Finally, we contrasted the homogeneous $\delta$-approximation with existing approaches. In contrast to bounded approaches, homogeneous $\delta$-approximations offer substantial global information. Compared to the $(\varepsilon ,\delta )$-framework, which is to the best of our knowledge the only framework applicable to general unbounded problems, our approach requires no additional assumptions and guarantees the existence of a solution. Beyond the technical distinctions, the proposed homogeneous $\delta$-concept offers a fundamentally different perspective: it combines global information with user-specified step-wise local refinement. Thereby we provide a versatile framework for decision-making in convex vector optimization.

\medskip 

\subsection*{Disclosure of interest}
The authors report there are no competing interests to declare.

\newpage
\sloppy
\printbibliography[heading=bibintoc, notcategory=uncited]
\fussy

\end{document}